\newcounter{ordercounter}
\newcommand{\resetorder}{\setcounter{ordercounter}{0}}
\newcolumntype{L}{>{\centering\arraybackslash}p{1.2cm}}
\newcolumntype{C}{>{\centering\arraybackslash}p{1.0cm}}
\newcolumntype{H}{>{\raggedright\arraybackslash}p{9.5cm}} 
\newcolumntype{P}{>{\raggedright\arraybackslash}p{2.5cm}}
\newenvironment{breakablealgorithm}[1][1cm]
{
\begin{center} %
\begin{adjustwidth}{#1}{#1}
\refstepcounter{algorithm}
\noindent\rule{\linewidth}{0.8pt}\par\nointerlineskip \kern2pt
\renewcommand{\caption}[2][\relax]{
{\raggedright\textbf{\ALG@name~\thealgorithm} ##2\par}

\ifx\relax##1\relax

\addcontentsline{loa}{algorithm}{\protect\numberline{\thealgorithm}##2}%

\else

\addcontentsline{loa}{algorithm}{\protect\numberline{\thealgorithm}##1}%

\fi

\kern2pt \par\nointerlineskip \noindent\rule{\linewidth}{0.8pt}\par\nointerlineskip \kern2pt

}

}{

\kern2pt \par\nointerlineskip \noindent\rule{\linewidth}{0.8pt}

\end{adjustwidth}
\end{center} %

}
\newcommand{\algorithmicbreak}{\textbf{break}}
\newcommand{\BREAK}{\STATE \algorithmicbreak}
\newcommand{\algorithmiccontinue}{\textbf{continue}}
\newcommand{\CONTINUE}{\STATE \algorithmiccontinue}
\newcommand{\Comment}[1]{\hfill\textcolor{gray}{// #1}}
\newcommand{\LineComment}[1]{%
  \item[]\hspace*{\algorithmicindent}\textcolor{gray}{// #1}%
}
\newcommand{\LongState}[1]{%
  \parbox[t]{\dimexpr\linewidth - \algorithmicindent - 1em\relax}{%
    \raggedright #1%
  }%
}
\newtheorem{dfn}{Definition}[section]
\newtheorem{thm}[dfn]{Theorem}
\newtheorem{thmo}{Theorem}[section]
\newtheorem{clm}[dfn]{Claim}
\newtheorem{lem}[dfn]{Lemma}
\newtheorem{fact}[dfn]{Fact}
\newtheorem{problem}[thmo]{Problem}
\newtheorem{cons}[thmo]{Construction}
\def\QED{\hfill \rule{7pt}{7pt}}
\newcommand{\dH}{{\mathcal{H}}}
\newcommand{\dV}{{\mathcal{V}}}
\newcommand{\dE}{{\mathcal{E}}}
\newcommand{\dT}{{\mathcal{T}}}
\newcommand{\dC}{{\mathcal{C}}}
\newcommand{\dF}{{\mathcal{F}}}
\def\ex{\mathrm{ex}}
\def\sat{\mathrm{sat}}
\newcommand{\Rmnum}[1]{\expandafter\@slowromancap\romannumeral #1@}
\definecolor{cream}{RGB}{253, 246, 227}
\title{Saturation numbers for $3$-uniform Berge-$K_4$}
\author{Yihan Chen\footnote{School of Mathematical Sciences, University of Science and Technology of China (\texttt{cyh2020@mail.ustc.edu.cn})}~~~~~~ Jialin He\footnote{(\texttt{majlhe@outlook.com})}~~~~~~ Tianying Xie\footnote{School of Mathematics and Statistics, Fuzhou University (\texttt{xiety@fzu.edu.cn})}~~~~~~}
\date{\today}
\begin{document}

\maketitle

\begin{abstract}
    The saturation number $\sat_r(n,\dF)$ is the minimum number of hyperedges in an $r$-uniform $\dF$-saturated hypergraph on $n$ vertices. 
    We determine this parameter for $3$-uniform Berge-$K_4$ hypergraphs, proving that $\sat_3(n,\text{Berge-}K_4)=n$ for $n =5,7,8$ and $n\ge 96$, while $\sat_3(6,\text{Berge-}K_4)=5$. 
    This resolves a problem posed by English, Kritschgau, Nahvi, and Sprangel~\cite{EKNS2024} for large $n.$    
    Using a computer search, we classify all extremal hypergraphs for $5\le n\le 8.$ For $n\geq 96$, we further show the existence of many non-isomorphic extremal families. 
    Our approach synthesizes structural insights with computational power.
\end{abstract}

\section{Introduction}
As a cornerstone of modern combinatorics, extremal graph theory is concerned with investigating the limits of structural properties in graphs and hypergraphs under various constraints. Given a family $\dF$ of (hyper)graphs, a (hyper)graph is called $\dF$-free if it contains no copy of any member of $\dF$. 
One of the central problems in extremal graph theory is the Tur\'{a}n problem: for a family $\dF$ of forbidden subgraphs, determine the maximum number $\ex(n,\dF)$  of (hyper)edges in an $n$-vertex $\dF$-free (hyper)graph. 
This problem is well-understood for complete graphs, starting with Mantel's theorem for $K_3$~\cite{Mantel1907} and Tur\'{a}n's theorem for $K_t$~\cite{Turan1941}, and asymptotically for general graphs by the Erd\H{o}s--Stone--Simonovits theorem~\cite{ES1966,ES1946}.

A (hyper)graph is \textit{$\dF$-saturated} if it is $\dF$-free but the addition of any new (hyper)edge creates a copy of some element of $\dF$. 
In contrast to Tur\'{a}n problem, the saturation problem asks for the minimum number of edges in an $\dF$-saturated (hyper)graph. 
In particular, the $r$-uniform saturation number of hypergraph $\dF$ is denoted by 
$$\sat_r(n,\dF)=\min \left\{ |E(\dH)|:|V(\dH)|=n, \dH\text{ is }r\text{-uniform and }\dF\text{-saturated}\right\}.$$ 
Saturation numbers and their variants have wide applications in combinatorics and are also relevant to other areas, such as bootstrap percolation~\cite{BBMR2012} and topology~\cite{CCKK2024,TT2025}.

The study of saturation numbers began with graphs was initiated independently by Zykov in 1949~\cite{Zykov1949} and by Erd\H{o}s, Hajnal, and Moon in 1964~\cite{EHM1964}, determining $\sat_2(n,K_p)=\binom{n}{2}-\binom{n-p+2}{2}$ and characterizing the unique extremal graph. 
Later, K\'aszonyi and Tuza~\cite{KT1986} established the linear upper bound $\sat_2(n,\mathscr{F}) = O(n)$ for any graph family $\mathscr{F}$ and solved the problem for several specific families (e.g., stars, paths, matchings). 
For further graph saturation results, we refer readers to \cite{BFP2010, 
CFFS2012, 
Tuza1989} and subsequent works.

For hypergraphs, Bollob\'{a}s~\cite{Bollobas1965} generalized the complete graph result to the $r$-uniform complete hypergraph $K_p^r$ in 1965, proving an exact formula $\sat_r(n,K_p^r)=\binom{n}{r}-\binom{n-p+r}{r}$ and showing the uniqueness of extremal hypergraph. 
Pikhurko~\cite{Pikhurko1999} later showed the general upper bound $\sat_r(n,\mathscr{F}) = O(n^{r-1})$ for any finite family $\mathscr{F}$ of $r$-uniform hypergraphs. Additional exact results for specific hypergraph families appear in~\cite{CS2024, CFFS2012, EKZ2023, EFT1991}.

Determining exact saturation numbers remains challenging even for graphs, and is considerably harder in the hypergraph setting, where only a few precise values are known. 
A particularly active area of research within this field concerns \textit{Berge hypergraphs}, a notion introduced by Gerbner and Palmer~\cite{GP2017} that has attracted considerable interest.
Given a graph $F$, a hypergraph $\dH$ is called a \textbf{Berge-$F$} if there exists a \textbf{bijection} $\phi: E(F) \to E(\dH)$ such that $\{u,v\} \subseteq \phi(uv)$ for every edge $uv \in E(F)$. 
The saturation number for Berge-$F$ was first systematically studied by English, Gordon, Graber, Methuku, and Sullivan~\cite{EGGMS2019}. 
They established several bounds and exact results when $F$ is a path, cycle, or matching, and conjectured that $\sat_r(n,\text{Berge-}F)=O(n)$ for any fixed graph $F$.
Recently, English, Kritschgau, Nahvi, and Sprangel~\cite{EKNS2024} determined the asymptotic saturation number for Berge cliques, showing that
$$\sat_r(n,\text{Berge-}K_p) \sim \frac{p-2}{r-1}\,n.$$
Despite this asymptotic progress, exact values are known only for the smallest case $p=3$. This gap naturally leads to the problem of determining the exact saturation number $\sat_r(n,\text{Berge-}K_p)$ for $p \ge 4$. 
Based on an extremal construction, English et al. posed the following concrete problem.

\begin{problem}[\cite{EKNS2024}]
    Determine $\sat_3(n,\text{Berge-}K_4)$ exactly. In particular, is $\sat_3(n,\text{Berge-}K_4)=n$ when $n$ is odd?
\end{problem}

In this paper, we resolve the problem for all integers $n$ with $5 \le n \le 8$ and $n \ge 96$. Using parallel computation (see~\ref{Sec:Appendix A}), we determine all extremal hypergraphs for $n = 5,6,7,8$, and for $n \ge 96$ we exhibit many non-isomorphic families of extremal hypergraphs. Our main result is the following.

\begin{thm}\label{mainthm}
    For $n = 5,7,8$ and $n\geq 96$, we have $$\sat_3(n,\mathrm{Berge}\text{-}K_4)=n,$$ 
    while $\sat_3(6,\mathrm{Berge}\text{-}K_4)=5$. 
    Moreover, there exist many non-isomorphic extremal hypergraphs.
\end{thm}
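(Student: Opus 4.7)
The plan is to prove the theorem in three components: an explicit family of $n$-vertex Berge-$K_4$-saturated $3$-uniform hypergraphs achieving $|E(\dH)|=n$; a structural lower bound showing no such hypergraph can have fewer edges for $n = 5, 7, 8$ or $n \geq 96$; and an exhaustive computer search that classifies the extremal examples for small $n$ and isolates the anomalous case $n = 6$. I would begin with the upper bound by identifying a small base hypergraph $\dH_0$ that is Berge-$K_4$-saturated with $|E(\dH_0)|=|V(\dH_0)|$, together with a growth operation that attaches a new vertex $w$ and a single hyperedge through $w$ in a way that (a) avoids creating a Berge-$K_4$, and (b) ensures every triple touching $w$ extends to a Berge-$K_4$ in the enlarged hypergraph. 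Iterating this growth yields infinitely many $n$-edge saturated hypergraphs; because many inequivalent attachments exist at each step, one obtains many non-isomorphic extremal families. The threshold $n \geq 96$ likely reflects how much \emph{starting material} the base must already contain in order for every growth step to succeed.

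For the lower bound, I would argue by contradiction: suppose $\dH$ is Berge-$K_4$-saturated on $n$ vertices with $|E(\dH)| \leq n-1$, so the average degree is strictly less than $3$. The saturation condition, applied to any non-edge $T = \{a,b,c\}$, supplies a witness vertex $v \notin T$ and a bijection from the six edges of the $K_4$ on $T \cup \{v\}$ onto $\{T\}$ together with five distinct existing hyperedges, each covering one of the remaining pairs. This forces strong codegree and link constraints around every witness. I plan to proceed by a case analysis on the minimum degree of $\dH$: first rule out isolated and degree-$1$ vertices by constructing non-edges incident to them that have no valid witness; then bound the number of degree-$2$ vertices via their link structure; and finally aggregate the constraints into a global counting contradiction. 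The main obstacle here is controlling how low-degree vertices can interact in a large, sparse hypergraph, and ensuring that the accumulated local constraints force $|E(\dH)| \geq n$ globally rather than only locally.

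The special cases $5 \leq n \leq 8$ are settled by exhaustive enumeration of $3$-uniform hypergraphs, which simultaneously yields $\sat_3(6,\mathrm{Berge}\text{-}K_4) = 5$ together with its extremal example and verifies $\sat_3(n,\mathrm{Berge}\text{-}K_4) = n$ for $n \in \{5,7,8\}$, as well as the full classification of extremal hypergraphs in these regimes. The hardest part of the proof will be the structural lower bound for $n \geq 96$: one has to rule out many possible sparse local configurations simultaneously and show that whenever $|E(\dH)| < n$ there must exist some non-edge that cannot be completed to a Berge-$K_4$. I expect the proof to be organized as a sequence of claims progressively narrowing the degree sequence of $\dH$, with the final contradiction coming from a careful double count of the witness incidences.
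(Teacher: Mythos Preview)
Your overall three-part plan (construction, structural lower bound, computer search for small $n$) matches the paper, and your lower-bound sketch --- rule out $\delta\le 1$, then control the degree-$2$ vertices, then finish by a global count --- is indeed how the paper proceeds. However, two points in your proposal are genuinely wrong and would derail the argument.

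\textbf{The growth operation for the upper bound cannot work as stated.} You propose attaching a single new vertex $w$ together with a single hyperedge $wuv$ through two existing vertices $u,v$. But if the base hypergraph $\dH_0$ is already Berge-$K_4$-saturated, then by definition adding \emph{any} new hyperedge creates a Berge-$K_4$; in particular $wuv$ is such a new hyperedge, and it covers the pair $\{u,v\}$ of old vertices, so whenever $(u,v)$ is good in $\dH_0$ (which is typically the case in a saturated hypergraph) the enlarged hypergraph already contains a Berge-$K_4$. Concretely, starting from $\dC_5^3$ on $\{1,\dots,5\}$ and adding $w12$ produces a Berge-$K_4$ on core $\{1,2,3,5\}$. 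The paper avoids this by attaching a gadget $\dT$ consisting of \emph{two} new vertices $a_1,a_2$ and \emph{two} new hyperedges $a_1a_2x_1,\,a_1a_2x_2$; each new hyperedge then meets the old vertex set in only one vertex, so it cannot serve as the image of any pair in a core lying entirely inside $\dH_0$, and no Berge-$K_4$ is created. This is not a cosmetic difference: a $+1$ vertex / $+1$ edge step is incompatible with staying Berge-$K_4$-free over a saturated base.

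\textbf{The threshold $96$ is on the wrong side.} You attribute $n\ge 96$ to the upper-bound construction needing enough ``starting material''. In the paper the constructions work for every $n\ge 5$ (one base for odd $n$, another for even $n\ge 8$), so the upper bound $\sat_3(n,\mathrm{Berge}\text{-}K_4)\le n$ holds throughout. The constant $96$ arises entirely in the \emph{lower bound}: after partitioning $V(\dH)=X\cup A$ with $X=\{v:d(v)\ge 3\}$ and $A=\{v:d(v)=2\}$, one shows $e(\dH[A,X])\ge \tfrac{4}{3}|A|$, hence $|X|\ge n/4$, and the various case-by-case edge counts need $|X|\ge 24$ (so $n\ge 96$) to close. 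Your lower-bound outline should therefore anticipate this kind of partition-and-count structure, and in particular the classification of how degree-$2$ vertices can attach to $X$, rather than expecting the threshold to emerge from the construction side.
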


We remark that the constant $96$ is not optimal; a more careful analysis could lower it, though the proof would become more tedious. Nevertheless, the remaining gap cannot be easily closed with current methods.

\subsection{Definitions and notations}
Let $\dH = (V(\dH),E(\dH))$ be a hypergraph, we denote by $v(\dH)$ and $e(\dH)$ the number of vertices and hyperedges of $\dH$, respectively. The minimum degree of $\dH$ is denoted by $\delta(\dH)$. 
A hypergraph is called an $r$-uniform hypergraph (or $r$-graph for short) if all its hyperedges have cardinality $r$. Assume that $\dH$ is a $3$-graph, and let $v\in V(\dH)$ be a vertex, we set $N^d_{\dH}(v)=\{(v_i,v_j): vv_iv_j\in E(\dH)\}$, $N_{\dH}(v)=\left\{v_i: \{v,v_i\}\subset e \mbox{ for some } e\in E(\dH)\right\}$, and $d_{\dH}(v)=|\{e\in E(\dH):v\in e\}|$. For a pair of vertices $(u,v)\in V(H)\times V(H)$, we set $N_{\dH}(u,v)=\{w\in V(\dH):uvw\in E(\dH)\}$ and $d_{\dH}(u,v)=|N_{\dH}(u,v)|$.

Recall that $\dH$ is a Berge-$F$ if there exists a bijection $\phi: E(F) \to E(\dH)$ such that $\{u,v\}\subseteq \phi(uv)$ for every edge $uv\in E(F)$. The \textbf{core} of $\dH$ is the set of vertices that correspond to the vertices of $F$.
For $u, v\in V(\dH)$, we say that the pair $(u,v)$ is $\mathbf{good}$ if $\dH+uv$ creates a new Berge-$K_4$; otherwise, the pair is $\mathbf{bad}$. 
Consequently, if $(u,v)$ is good, then the core of the newly formed Berge-$K_4$ must contain both $u$ and $v$. 

For an integer $\ell \ge r$, the \textit{$r$-uniform tight cycle} of length $\ell$, denoted $\dC_\ell^r$, is the $r$-graph with vertex set $V(\dC^r_\ell)=\{v_1,v_2,\cdots,v_\ell\}$ and hyperedge set $E(\dC^r_\ell)=\left\{v_iv_{i+1}\cdots v_{i+r-1}: i\in [\ell]\right\}$, where indices are taken modulo $\ell$.

\subsection{Organization of the paper}
The paper is structured as follows. In Section~\ref{Sec: Upper bound}, we construct hypergraphs to establish the upper bound $\sat_3(n,\text{Berge-}K_4)\le n$. Section~\ref{Sec: Lower bound} completes the proof of Theorem~\ref{mainthm} by providing a matching lower bound for $n\ge 96$ and discussing the extremal hypergraphs. The algorithm we developed for searching for uniform Berge-$K_\ell$-saturated hypergraphs is detailed in Section~\ref{Sec: Algorithm}. We conclude with a discussion of further problems in Section~\ref{Sec: Conclusion}. Computational results are presented in~\ref{Sec:Appendix A}.

\section{Upper bound}
\label{Sec: Upper bound}
In this section, we establish $\sat_3(n,\text{Berge-}K_4)\le n$ for every $n\geq 5$ by constructing two families of $3$-uniform Berge-$K_4$-saturated hypergraphs with $n$ vertices and $n$ hyperedges, one for odd $n$ and one for even $n$. Note that the case $n=6$ is special, we discuss it in \ref{Sec:Appendix A}. The following discussion concerns the cases where $n\neq 6$.

We begin by introducing the tool used to generate large Berge-$K_4$-saturated hypergraphs from smaller ones. Let $\dT$ be the hypergraph on $\{a_1,a_2,x_1,x_2\}$ with hyperedge set $\{a_1a_2x_1,a_1a_2x_2\}$. For a $3$-uniform Berge-$K_4$-saturated hypergraph $\dH=(\dV, \dE)$ and a vertex pair $(u,v)\in \dV \times \dV$, we say that $\dT$ can be added on $(u,v)$ if, by identifying $(x_1,x_2)$ with $(u,v)$ and attaching $\dT$ to $\dH$, the resulting hypergraph remains Berge-$K_4$-saturated. We have the following lemma.
\begin{lem}\label{Lemma: Add T}
Let $\dH=(\dV, \dE)$ be a $3$-uniform Berge-$K_4$-saturated hypergraph, and let $(u,v)$ be a pair of vertices in $\dV$. If $\dT$ can be added on $(u,v)$, then for any positive integer $k$, $k$ copies of $\dT$ can be added on $(u,v)$ simultaneously. Moreover, it is impossible to add two copies of $\dT$ simultaneously on two distinct pair of vertices in $\dV$.
\end{lem}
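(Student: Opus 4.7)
The plan is to verify the two assertions in turn. The key structural fact, which I would isolate as a side-lemma, is that ``$\dT$ can be added on $(u,v)$'' forces $\dH$ to contain a hyperedge through the pair $\{u,v\}$.

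For the first assertion, write $\dH^{(k)}$ for $\dH$ together with $k$ disjoint copies $\dT^{(1)},\ldots,\dT^{(k)}$ of $\dT$ attached at $(u,v)$, each contributing new vertices $a_1^{(i)},a_2^{(i)}$ and hyperedges $\{a_1^{(i)} a_2^{(i)} u\},\{a_1^{(i)} a_2^{(i)} v\}$. Berge-$K_4$-freeness is easy: every new vertex has degree $2$, so it cannot be a core vertex of any Berge-$K_4$; the core would thus lie in $\dV$, and since each new hyperedge contains only one $\dV$-vertex it cannot contribute a core pair, forcing any putative Berge-$K_4$ into $\dH$ itself, a contradiction. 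For saturation, let $T$ be any missing triple of $\dH^{(k)}$. If $T$ uses new vertices from at most one copy $\dT^{(i)}$, then after relabeling copy $i$ as copy $1$ the triple sits in $V(\dH')$ and is missing there; saturation of $\dH'$ (the hypothesis) produces a Berge-$K_4$ in $\dH'+T$ that lifts into $\dH^{(k)}+T$ via the isomorphic embedding. If $T$ contains new vertices $x \in V(\dT^{(i)})$ and $y \in V(\dT^{(j)})$ from two distinct copies $i \neq j$, I take core $C = \{x,y,u,v\}$ and cover its six $K_4$-pairs by: $\{x,y\}$ via $T$; $\{x,u\},\{x,v\}$ via the two attaching hyperedges of $\dT^{(i)}$; $\{y,u\},\{y,v\}$ via those of $\dT^{(j)}$; and $\{u,v\}$ via some $e_{uv} \in \dE$. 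Pairwise distinctness is automatic from their differing new-vertex contents.

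The side-lemma producing $e_{uv}$ is this: by saturation of $\dH'$ applied to the missing triple $\{a_1, a_2, w\}$ for any $w \in \dV \setminus \{u,v\}$, a Berge-$K_4$ appears, and it must use the newly added hyperedge (else it lives in the Berge-$K_4$-free $\dH'$). A degree count forces the core to be $\{a_1,u,v,w\}$ or $\{a_2,u,v,w\}$; covering the three remaining $K_4$-edges $\{u,v\},\{u,w\},\{v,w\}$ requires three distinct hyperedges of $\dH$ (since no hyperedge using $a_1$ or $a_2$ contains a $\dV$-pair), yielding a Berge-$K_3$ on $\{u,v,w\}$ inside $\dH$ and in particular the hyperedge $e_{uv}$ we need.

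For the second assertion, let $\dT_1,\dT_2$ be attached on distinct pairs $\{u_1,v_1\} \neq \{u_2,v_2\}$ via new vertices $(a_1,a_2),(b_1,b_2)$, producing $\dH''$, which is Berge-$K_4$-free by the same argument. I would show the missing triple $T^* = \{a_1,a_2,b_1\}$ creates no Berge-$K_4$ on addition, eliminating all possible cores: $b_2$ cannot be a core vertex (degree $2$); both $a_1$ and $a_2$ cannot lie in a core, since the three hyperedges through $\{a_1,a_2\}$ cannot supply distinct covers for pairs $\{a_1,\cdot\}$ and $\{a_2,\cdot\}$; if exactly one of $a_1,a_2$ (say $a_1$) is in the core, the three hyperedges through $a_1$ force the other three core vertices into $\{u_1,v_1,b_1\}$, giving $C = \{a_1,u_1,v_1,b_1\}$, but then covering $\{u_1,b_1\},\{v_1,b_1\}$ via hyperedges through $b_1$ would require $\{u_1,v_1\} \subseteq \{u_2,v_2\}$, contradicting distinctness; a symmetric count rules out a core containing $b_1$ but neither $a_1$ nor $a_2$; and if none of $a_1,a_2,b_1$ appears in the core then $T^*$ contributes no core pair and the Berge-$K_4$ would already sit in the Berge-$K_4$-free $\dH''$. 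Hence $\dH''$ is not saturated. The main obstacle is the bookkeeping in Part 1, enumerating how $T$ can interact with different copies and confirming that the six chosen hyperedges are pairwise distinct in every subcase; once the side-lemma supplying $e_{uv}$ is in hand, the construction is uniform and the verifications are routine.
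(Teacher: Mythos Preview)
Your proof is correct and follows essentially the same approach as the paper's: both hinge on the key side-lemma (the paper's Claim~\ref{ref:Claim u v adjacent}) that $\{u,v\}$ must lie in some hyperedge of $\dH$, and both use the same test triple $\{a_1,a_2,b_1\}$ for the second assertion with an equivalent case analysis on possible cores. The only organizational difference is that the paper proceeds by induction on $k$ whereas you reduce directly to the base case $\dH'=\dH^{(1)}$ via an embedding, which is a minor stylistic variation.
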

\begin{proof}
    We prove the first statement by induction on $k$. It holds trivially for $k = 1$. Assume that the statement holds for $k = t$. Let $k = t+1\ge 2$, we prove that after adding $k$ copies of $\dT$ on $(u,v)$ the resulting hypergraph remains Berge-$K_4$-saturated.

    We denote the resulting hypergraph by $\dH_k$, it is easy to see that $\dH_k$ is Berge-$K_4$-free. Because every edge of a Berge-$K_4$ contains two vertices of degree at least $3$, which implies that the edges of $\dT$'s can not appear in any Berge-$K_4$. Together with $\dH$ is Berge-$K_4$-free, the claim follows.

    We now prove that adding any non-hyperedge $e$ of $\dH_k$ yields a Berge-$K_4$. For a given $\dT$, let $\dT_A:= \{v\in V(\dT): deg_{\dT}(v)=2\}$. We note that for every copy of $\dT_A$ in $\dH_k$, $\dH_k\left[V(\dH_k)\setminus \dT_A\right]$ is isomorphic to $\dH_{k-1}$.  Since $\dH$ is a Berge-$K_4$-saturated hypergraph, we only need to consider the non-hyperedges which intersects some $\dT_A$. If $e$ intersects at most one copy of $\dT_A$, we delete a copy of $\dT_A$ that is disjoint from $e$. By the induction hypothesis, we obtain a Berge-$K_4$. If $e$ intersects at least two copies of $\dT_A$, without loss of generality, assume that the two copies are $\dT_A^1$ and $\dT_A^2$ and let $a_1\in e\cap \dT_A^1, a_2\in e\cap \dT_A^2$. We have the following claim.
    \begin{clm}\label{ref:Claim u v adjacent}
        $u$ and $v$ are adjacent in $\dH$.
    \end{clm}
    If the claim is true, we obtain a Berge-$K_4$ with core $\{a_1,a_2,u,v\}$ immediately.
    \begin{proof}[Proof of Claim~\ref{ref:Claim u v adjacent}]
        We delete some copy of $\dT_A$ in $\dH_{k}$ to obtain $\dH_{k-1}$. Let $\{a_3,a_4\}$ be a copy of $\dT_A$ in $\dH_{k-1}$, and let $x\in V(\dH_{k-1})$ be a vertex such that $x\notin \{u,v,a_3,a_4\}$. Then $\{a_3,a_4,x\}$ forms a non-hyperedge in $\dH_{k-1}$. By the induction hypothesis, adding this non-hyperedge yields a Berge-$K_4$. Clearly, $a_3$ and $a_4$ can not appear in the core of the Berge-$K_4$ simultaneously, which implies that the core of the Berge-$K_4$ is either $\{u,v,x,a_3\}$ or $\{u,v,x,a_4\}$. In both cases, there should be a hyperedge in $\dH$ connects $u$ and $v$, which yields the claim.
    \end{proof}

    We now turn to the second statement, suppose that there are two copies of $\dT$, say $\dT^1$ and $\dT^2$, added simultaneously on two different vertex pairs. Let $\dT_A^1 = \{a_1,a_2\}$ and $\dT_A^2 = \{a_3,a_4\}$, then $\{a_1,a_2,a_3\}$ is not a hyperedge. We claim that adding it to the hypergraph does not create a new Berge-$K_4$, which is a contradiction to the definition of Berge-$K_4$-saturated. Indeed, since $a_1$ and $a_2$ can not be in the core of a Berge-$K_4$ simultaneously, and in the original hypergraph (before the addition), $a_3$ has at most one common neighbor with each of $a_1$ and $a_2$, which implies that $a_3$ and $a_1$ (or $a_3$ and $a_2$) can not be in the core of a Berge-$K_4$ simultaneously. Thus, all the pairs of vertices $(a_1,a_2)$, $(a_1,a_3)$, and $(a_2,a_3)$ are bad and we are done.
\end{proof}

Next, we provide the constructions for the cases when $n\ge 5$ is odd and when $n\ge 8$ is even  respectively; Lemma~\ref{Lemma: Add T} guarantees that both constructions are Berge-$K_4$-saturated with $n$ edges. 

\begin{figure}[H]
    \centering
  \begin{minipage}{0.48\textwidth}
    \centering
    \includegraphics[trim=300 190 220 200, 
    clip, width=0.7\textwidth]{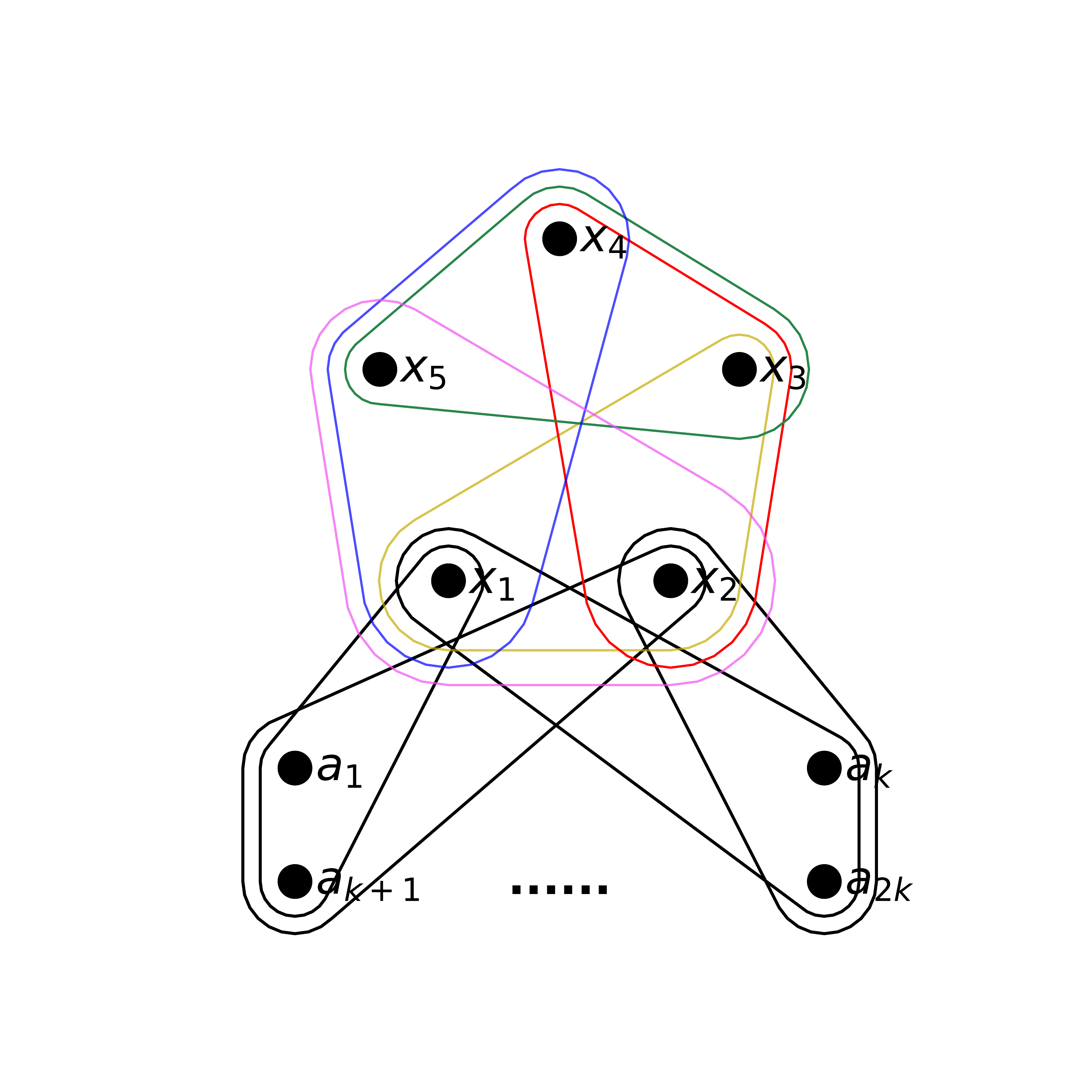}
    \caption{Figure of Construction~\ref{Construction: odd n}}
    \label{fig:1}
  \end{minipage}
  \hfill 
  \begin{minipage}{0.48\textwidth}
    \centering
    \includegraphics[trim=300 190 220 200, 
    clip, width=0.7\textwidth]{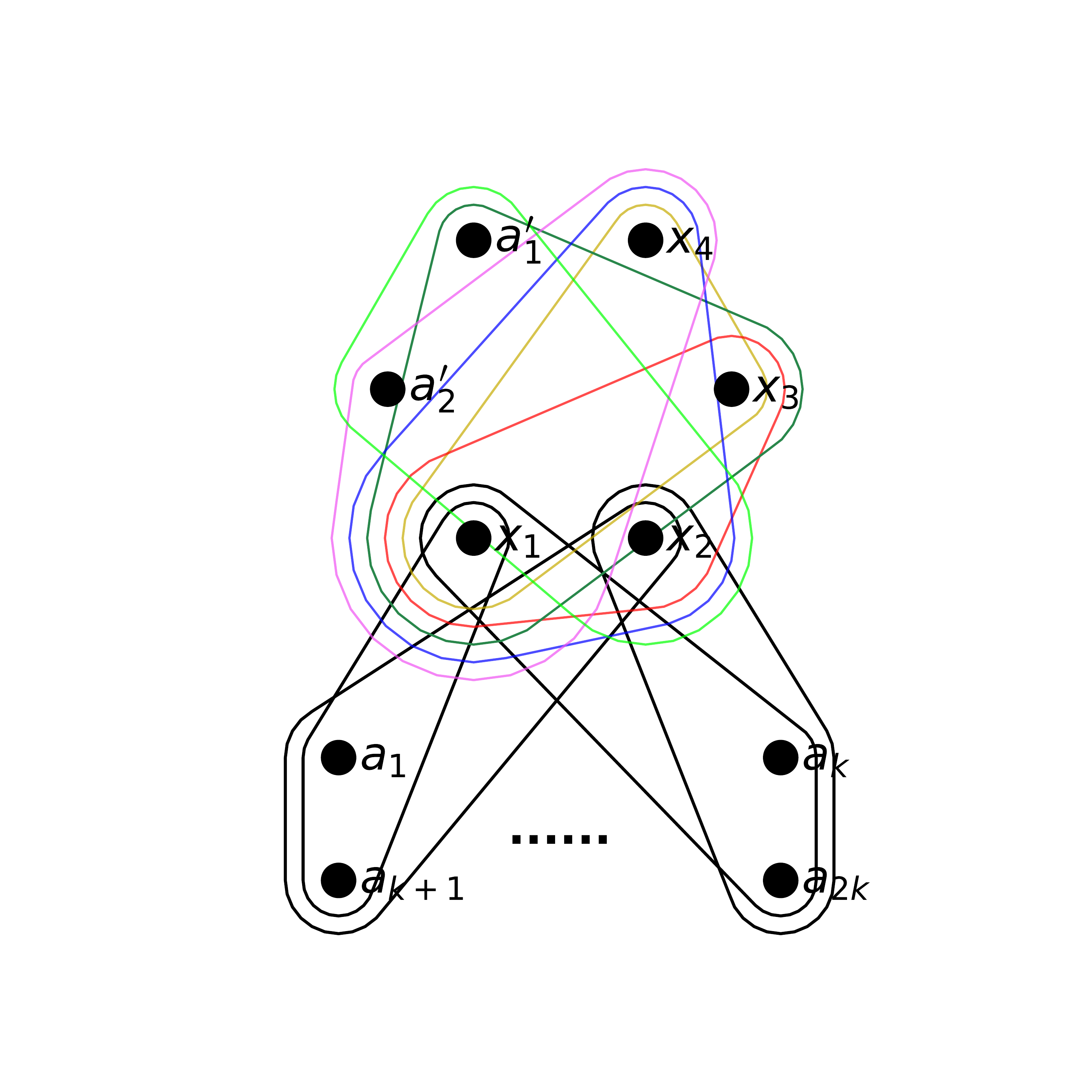}
    \caption{Figure of Construction~\ref{Construction: even n}}
    \label{fig:2}
  \end{minipage}
\end{figure}

\begin{cons}\label{Construction: odd n}
    Let $n\ge 5$ be an odd integer, set $k=\frac{n-5}{2}$, and let $\dH$ be the 3-uniform tight $5$-cycle, $\dC_5^3$, with vertex set $\{x_1,x_2,x_3,x_4,x_5\}$ and hyperedge set $\{x_1x_2x_3,x_2x_3x_4,x_3x_4x_5,x_4x_5x_1,x_5x_1x_2\}$ (isomorphic to the No.$6$ hypergraph of order $5$ in Table~\ref{Table: Extremal hypergraphs}) which serves as the base hypergraph. We add $k$ copies of $\dT$ on the vertex pair $(x_1,x_2)$. 
    See Figure 1 as above.
\end{cons}

\begin{cons}\label{Construction: even n}
    Let $n\ge 8$ be an even integer, set $k=\frac{n-6}{2}$, and let $\dH$ be the 3-graph with vertex set $\{x_1,x_2,x_3,x_4,a_1^\prime,a_2^\prime\}$ and hyperedge set $\{x_1x_2x_3,x_1x_3x_4,x_1x_3a_1^\prime,x_1x_2x_4,x_1x_4a_2^\prime, x_2a_1^\prime a_2^\prime\}$. Verified by computer, $\dH$ is Berge-$K_4$-saturated and admits the addition of $\dT$ on the vertex pair $(x_1,x_2)$. We take $\dH$ as the base hypergraph and add $k$ copies of $\dT$ on the vertex pair $(x_1,x_2)$.
    See Figure 2 as above.
\end{cons}

\section{Lower bound and extremal hypergraphs}\label{Sec: Lower bound}
This section is devoted to proving Theorem~\ref{mainthm}. In Subsection~\ref{Sec:lower bound n large}, we establish the lower bound $\sat_3(n, \text{Berge-}K_4)\ge n$ for all $n\ge 96$. Subsection~\ref{Sec: pf of main thm} analyzes the smaller cases ($n\le 8$) and discusses the structure of extremal hypergraphs. 

We begin with several useful lemmas and facts.
\subsection{Lemmas and facts}
We first state two elementary facts that will be used frequently.
\begin{fact}\label{Fact1}
    Let $n\geq 5$ and $\dH$ be a Berge-$K_4$-saturated 3-graph on $n$ vertices. The following hold. 
    \begin{enumerate}
		\item[(i).] If $u,v\in V(\dH)$ and $(u,v)$ is good, then any new Berge-$K_4$ in $\dH+uv$ must contain all hyperedges containing both $u$ and $v$. Moreover, $d_{\dH}(u,v)\leq 2$ and $d_{\dH}(w)\geq 3$ for every $w\in N_{\dH}(u,v)$. 
		\item[(ii).] For $v_1v_2v_3\in E(\dH)$ with $d_{\dH}(v_1)\leq d_{\dH}(v_2)\leq d_{\dH}(v_3)$. If $d_{\dH}(v_1)\leq 2$, then $(v_2,v_3)$ is bad. If $d_{\dH}(v_1)\leq d_{\dH}(v_2)\leq 2$, then $(v_1,v_2)$, $(v_1,v_3)$ and $(v_2,v_3)$ are bad. If $d_{\dH}(v_1)\leq d_{\dH}(v_2)\leq d_{\dH}(v_3)\leq 2$, then $(v_1,u)$, $(v_2,u)$ and $(v_3,u)$ are bad for any $u\in V(\dH)\setminus \{v_1,v_2,v_3\}$.
    \end{enumerate}
\end{fact}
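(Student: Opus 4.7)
Both parts of Fact~\ref{Fact1} rest on the same rigidity: a new Berge-$K_4$ created by a good pair is forced to use almost all ``obvious'' hyperedges incident to that pair.

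For part (i), let $(u,v)$ be good, let the resulting Berge-$K_4$ have core $\{u,v,w_1,w_2\}$, and let $\phi:E(K_4)\to E(\dH+uv)$ be the witnessing bijection. I would first observe that $\phi(uv)$ must equal the newly added pair $\{u,v\}$, for otherwise all six images of $\phi$ lie in $\dH$ and $\dH$ already contains a Berge-$K_4$, a contradiction. Next, for any hyperedge $e=\{u,v,z\}\in E(\dH)$ not in the image of $\phi$, replacing $\phi(uv)$ by $e$ yields a Berge-$K_4$ entirely inside $\dH$, again a contradiction; so every such $e$ lies in the image. The containment requirement that $\phi(f)$ contain both endpoints of $f$ then forces the third vertex $z$ to lie in $\{w_1,w_2\}$, whence $d_\dH(u,v)\le 2$. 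Finally, each $w\in N_\dH(u,v)\subseteq\{w_1,w_2\}$ is a core vertex, and the three $K_4$-edges incident to $w$ correspond under $\phi$ to three distinct 3-hyperedges of $\dH$ containing $w$, giving $d_\dH(w)\ge 3$.

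For part (ii), I would apply (i) repeatedly. The first sub-statement is immediate: if $(v_2,v_3)$ were good, then $v_1\in N_\dH(v_2,v_3)$ forces $d_\dH(v_1)\ge 3$, contradicting $d_\dH(v_1)\le 2$. For the remaining sub-statements, suppose some good pair $(x,y)$ has $\{x,y\}\cap\{v_1,v_2,v_3\}\ne\emptyset$, with core $\{x,y,w_1,w_2\}$ and bijection $\phi$. By (i), $\{v_1,v_2,v_3\}=\phi(f)$ for a unique edge $f$ of $K_4$, and both endpoints of $f$ lie in $\{v_1,v_2,v_3\}\cap\{x,y,w_1,w_2\}$. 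For the third sub-statement, since $u\notin\{v_1,v_2,v_3\}$ at least one endpoint of $f$ is a core vertex of type $w_\ell$ belonging to $\{v_1,v_2,v_3\}$; the three $K_4$-edges at that vertex yield three distinct 3-hyperedges of $\dH$ containing it, forcing its degree above $2$. For the second sub-statement $(x,y)=(v_1,v_2)$, the edge $f$ cannot be $v_1v_2$ (since $\phi(v_1v_2)$ is the 2-edge), so $f=v_iv_3$ for some $i\in\{1,2\}$; the vertex $v_{3-i}$ is then a core vertex not incident to $f$, and $\phi(v_{3-i}w_1),\phi(v_{3-i}w_2),\{v_1,v_2,v_3\}$ are three distinct hyperedges of $\dH$ at $v_{3-i}$, again forcing $d_\dH(v_{3-i})\ge 3$. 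Each sub-case thus contradicts its degree hypothesis.

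The main obstacle is the combinatorial bookkeeping: in each sub-case one must pinpoint the vertex of $\{v_1,v_2,v_3\}$ whose degree is driven above $2$ and verify distinctness of the three hyperedges under $\phi$. No conceptual ingredient beyond (i) is required.
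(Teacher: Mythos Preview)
Your argument for (i) and for the first sub-statement of (ii) is correct and matches the paper. For the pair $(v_1,v_2)$ in the second sub-statement you take a slightly different but valid route: you locate a specific core vertex $v_{3-i}$ of degree at least $3$, whereas the paper simply counts that at most three hyperedges of $\dH$ meet $\{v_1,v_2\}$ while a Berge-$K_4$ with both $v_1,v_2$ in its core would require four. You do not explicitly address $(v_1,v_3)$ and $(v_2,v_3)$ in the second sub-statement, but these follow from (i) exactly as in the first sub-statement.

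There is one genuine gap. In your unified setup for the ``remaining sub-statements'' you assert that, by (i), $\{v_1,v_2,v_3\}=\phi(f)$ for some $K_4$-edge $f$. But part (i) only forces a hyperedge into the image of $\phi$ when it contains \emph{both} coordinates of the good pair. In the third sub-statement the good pair is $(v_i,u)$ with $u\notin\{v_1,v_2,v_3\}$, so the hyperedge $\{v_1,v_2,v_3\}$ contains $v_i$ but not $u$, and (i) says nothing about it. The missing step (which the paper also uses, though tersely) is to invoke the hypothesis $d_{\dH}(v_i)\le 2$: the images $\phi(v_iw_1)$ and $\phi(v_iw_2)$ are two distinct hyperedges of $\dH$ through $v_i$, hence they are \emph{all} hyperedges at $v_i$, so in particular $\{v_1,v_2,v_3\}$ equals one of them and some $w_\ell$ lies in $\{v_1,v_2,v_3\}\setminus\{v_i\}$. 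With this correction your argument goes through.
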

\begin{proof}
For the first statement,
if $(u,v)$ is good, then any new Berge-$K_4$ must contain all hyperedges containing both $u$ and $v$, otherwise $H$ contains a Berge-$K_4$. Thus the core of any new Berge-$K_4$ must contain every vertex in $N_{\dH}(u,v)$, this implies that $d_{\dH}(u,v)\leq 2$ and $d_{\dH}(w)\geq 3$ for every $w\in N_{\dH}(u,v)$.

For the second statement,
let $v_1v_2v_3\in E(\dH)$ and $d_{\dH}(v_1)\leq d_{\dH}(v_2)\leq d_{\dH}(v_3)$. If $d_{\dH}(v_1)\leq 2$, then $(v_2,v_3)$ is bad by (i). 
Moreover, if $d_{\dH}(v_1)\leq d_{\dH}(v_2)\leq 2$, then there are at most three hyperedges that contain $v_1$ or $v_2$, which means that $v_1$ and $v_2$ can not be in the core of a Berge-$K_4$ simultaneously, thus $(v_1,v_2)$ is bad.
If $(v_1,v_3)$ is good, as $d_{\dH}(v_1)\leq 2,$ $v_1$, $v_2$, and $v_3$ must be in the core of the Berge-$K_4$ created by adding $v_1v_3$ in $\dH$, which contradicts that $v_1$ and $v_2$ can not be in the core of a Berge-$K_4$ simultaneously. Thus, $(v_1,v_3)$ is bad, and $(v_2,v_3)$ is bad can be proved similarly.  
Finally, let $d_{\dH}(v_1)\leq d_{\dH}(v_2)\leq d_{\dH}(v_3)\leq 2$. For every vertex $u\in V(\dH)\setminus \{v_1,v_2,v_3\}$ and every $v_i\in \{v_1,v_2,v_3\}$, suppose, for a contradiction, that $(u,v_i)$ is good, then the core of any new Berge-$K_4$ in $\dH+uv_i$ must contain two vertices in $\{v_1,v_2,v_3\}$, which is a contradiction to $d_{\dH}(v_1)\leq d_{\dH}(v_2)\leq d_{\dH}(v_3)\leq 2$. Thus $(v_1,u)$, $(v_2,u)$ and $(v_3,u)$ are bad and we are done.
\end{proof}

Next, we state two structural lemmas that will be used in later proofs. Let $\dH$ be a Berge-$K_4$-saturated $3$-graph on $n$ vertices with the minimum number of hyperedges. The first lemma shows that under certain degree and ``goodness'' conditions,  $\dH$ must contain a copy of $\dC^3_5$.
The second lemma gives a lower bound on the minimum degree of $\dH$.
\begin{lem}\label{lem1:tight C5}
	Let $n\geq 5$ be an integer, and let $\dH$ be a Berge-$K_4$-saturated $3$-graph on $n$ vertices with the minimum number of hyperedges. 
    Suppose there exists a vertex $v_1\in V(\dH)$ such that $d_{\dH}(v_1)=3$, every pair $(v_i,v_j)\in N^d_{\dH}(v_1)$ is good, and moreover, the pair $(v_1,v_k)$ is good for every $v_k\in N_{\dH}(v_1)$. Then there exist four vertices $v_2, v_3, v_4, v_5$ such that $\dH[\{v_1, v_2, v_3, v_4, v_5\}]$ contains a $\dC^3_5$.
\end{lem}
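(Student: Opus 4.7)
The plan is to analyze the pair-graph at $v_1$, use the good-pair hypotheses together with Fact~\ref{Fact1}(i) to force it to be a path of length three, and then extract two additional hyperedges on $N_{\dH}(v_1)$ that complete the tight $5$-cycle through $v_1$.

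Let $E_1,E_2,E_3$ be the three hyperedges at $v_1$, write $E_i=\{v_1\}\cup p_i$, and let $G$ be the auxiliary graph on $N_{\dH}(v_1)$ with edges $\{p_1,p_2,p_3\}$. Since $(v_1,u)$ is good for every $u\in N_{\dH}(v_1)$, Fact~\ref{Fact1}(i) gives $d_{\dH}(v_1,u)\le 2$, so $\Delta(G)\le 2$. Hence $G$, having three edges, is one of $3P_2$, $P_3\cup P_2$, $C_3$, or $P_4$. If $G$ has an isolated edge $\{a,b\}$, the good pair $(a,b)$ together with Fact~\ref{Fact1}(i) requires both $K_4$-edges $v_1a$ and $v_1b$ to be covered in the Berge-$K_4$ of $\dH+ab$; but $E_i=v_1ab$ is the only hyperedge through $v_1$ containing either of $a,b$, and a single hyperedge can only cover one $K_4$-edge, yielding a contradiction. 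This rules out $3P_2$ and $P_3\cup P_2$.

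To rule out $C_3$, label the neighbors so that $E_1=v_1u_1u_2$, $E_2=v_1u_2u_3$, $E_3=v_1u_1u_3$. The good pair $(u_1,u_2)$ has core $\{u_1,u_2,v_1,u_3\}$, and a coverage analysis, combined with $d_{\dH}(u_1,u_2)\le 2$ from Fact~\ref{Fact1}(i), forces any second hyperedge on the pair $\{u_1,u_2\}$ to have its third vertex in the core, hence $\{u_1,u_2,u_3\}\in E(\dH)$. Symmetric arguments for the good pairs $(u_1,u_3),(u_2,u_3)$ give $d_{\dH}(u_i,u_j)=2$ for every pair in $\{u_1,u_2,u_3\}$, so the only hyperedges covering any pair among $\{v_1,u_1,u_2,u_3\}$ are $E_1,E_2,E_3,\{u_1,u_2,u_3\}$. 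Now the good pair $(v_1,u_1)$ forces both $E_1,E_3$ into the Berge-$K_4$ (they both contain $\{v_1,u_1\}$), pinning the core to $\{v_1,u_1,u_2,u_3\}$; yet its five non-new $K_4$-edges cannot be covered by the four available hyperedges, a contradiction.

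In the remaining $P_4$ case, label the neighbors so that $E_1=v_1u_1u_2$, $E_2=v_1u_2u_3$, $E_3=v_1u_3u_4$. The good pairs $(v_1,u_2)$ and $(v_1,u_3)$ force the cores of their Berge-$K_4$s to be $\{v_1,u_2,u_1,u_3\}$ and $\{v_1,u_3,u_2,u_4\}$ respectively, since the hyperedge pairs $\{E_1,E_2\}$ and $\{E_2,E_3\}$ each contain the respective good pair and must both be used by Fact~\ref{Fact1}(i). A careful case analysis of how the remaining $K_4$-edges can be covered, together with the good pairs $(u_1,u_2),(u_3,u_4)$ and their degree bounds $d_{\dH}(u_1,u_2),d_{\dH}(u_3,u_4)\le 2$, and with the minimality of $\dH$ used to forbid extra hyperedges with third vertex outside $\{v_1,u_1,u_2,u_3,u_4\}$, forces both $\{u_1,u_2,u_4\}$ and $\{u_1,u_3,u_4\}$ into $E(\dH)$. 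Together with $E_1,E_2,E_3$ these yield a tight $5$-cycle on $\{v_1,u_1,u_2,u_3,u_4\}$ in the cyclic order $(u_1,u_2,v_1,u_3,u_4)$. The hardest part will be this final step: the coverage analyses easily locate the pairs on which extra hyperedges must exist, but pinning their third vertices to give precisely $\{u_1,u_2,u_4\}$ and $\{u_1,u_3,u_4\}$, rather than $\{u_1,u_2,u_3\}$ or $\{u_2,u_3,u_4\}$ or hyperedges involving outside vertices, requires interlocking several good-pair Berge-$K_4$ analyses with the minimality of $\dH$.
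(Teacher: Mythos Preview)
Your overall plan---reduce to the link graph $G$ on $N_{\dH}(v_1)$ and eliminate $3P_2$, $P_3\cup P_2$, $C_3$ before analyzing the $P_4$ case---is sound and is a cleaner repackaging of exactly the case split the paper carries out (the paper's intersection cases $|e_1\cap e_3|,|e_2\cap e_3|\in\{1,2\}$ correspond to your $P_3\cup P_2$, $C_3$, and $P_4$). Your isolated-edge argument for $3P_2$ and $P_3\cup P_2$ is correct, and your $C_3$ argument, while written in a slightly tangled order, can be made to work; it differs from the paper's, which instead uses the good pairs $(u_2,u_4)$ and $(u_3,u_4)$ to force $e_4=e_5=\{u_1,u_2,u_3\}$ directly.

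The genuine gap is in the $P_4$ case, and it is precisely the step you flag as hardest. You propose to pin down the third vertices of the extra hyperedges (forcing $\{u_1,u_2,u_4\}$ and $\{u_1,u_3,u_4\}$ rather than $\{u_1,u_2,u_3\}$ or $\{u_2,u_3,u_4\}$) by invoking minimality of $\dH$. This is the wrong tool: the paper never uses the bound $e(\dH)\le n$ in this lemma (it is stated at the top of the proof but plays no role), and there is no evident way to turn an edge-count inequality into the required structural constraint. What actually does the work is Berge-$K_4$-\emph{freeness}: the paper observes that no three hyperedges in $E(\dH)\setminus\{E_1,E_2,E_3\}$ can form a Berge-$K_3$ on any triple from $\{u_1,u_2,u_3,u_4\}$, since together with $E_1,E_2,E_3$ (which cover the three $v_1$-pairs) such a triangle would yield a Berge-$K_4$ already inside $\dH$. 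This single observation is what forces the many auxiliary hyperedges coming from the good pairs $(v_1,u_2)$, $(v_1,u_3)$, $(u_1,u_2)$, $(u_2,u_3)$, $(u_3,u_4)$ to coincide in the right way and produce exactly $\{u_1,u_2,u_4\}$ and $\{u_1,u_3,u_4\}$. Without it, your ``interlocking good-pair analyses'' cannot close: for instance, you cannot rule out the $(u_1,u_2)$-core being $\{v_1,u_1,u_2,u_3\}$ (and hence $f_1=\{u_1,u_2,u_3\}$) by degree or minimality considerations alone; it is the resulting Berge-$K_3$ on $\{u_1,u_2,u_3\}$, assembled from edges extracted from several different good-pair witnesses, that gives the contradiction. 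Replace ``minimality of $\dH$'' with this no-Berge-$K_3$ constraint and your outline becomes essentially the paper's proof.
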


\begin{proof}
	By Constructions~\ref{Construction: odd n} and \ref{Construction: even n}, we have $e(\dH)\leq n$. 
	Let $v_1\in V(\dH)$ be the vertex such that $d_{\dH}(v_1)=3$, every pair $(v_i,v_j)\in N^d_{\dH}(v)$ is good, and $(v_1,v_k)$ is good for every $v_k\in N_{\dH}(v_1)$. Let $e_1,e_2,e_3\in E(\dH)$ be the three hyperedges containing $v_1$, without loss of generality, set $e_1=v_1v_2v_3$. Since $(v_2,v_3)$ is good by Fact~\ref{Fact1} (i), the core of any Berge-$K_4$ in $\dH+v_2v_3$ contains $v_1$. Assume that one of such Berge-$K_4$'s has core $\{v_1,v_2,v_3,v_4\}$, and $e_1,e_2,e_3,e_4,e_5$ are its hyperedges in $\dH$ such that $v_1, v_2\in e_2$, $v_1,v_4\in e_3$,  $v_2, v_4\in e_4$ and $v_3,v_4\in e_5$. It is easy to see that $|e_1\cap e_2| = |\{v_1,v_2\}|=2$.

    We first claim that $v_2\not\in e_3$. Indeed, if not, then we have $e_1\cap e_2\cap e_3 =\{v_1,v_2\}$. Since $(v_1,v_2)$ is good, Fact~\ref{Fact1} (i) implies that $d_{\dH}(v_1,v_2)\leq 2$, which contradicts $d_{\dH}(v_1,v_2)\ge 3$. Hence $|e_1\cap e_3|\le 2$. 
    
    In the following of the proof, we proceed by analyzing the sizes of the intersections $|e_1\cap e_3|$ and $|e_2\cap e_3|$. 
    If $|e_1\cap e_3|=|e_2\cap e_3|=1$, then there exist two vertices $v_5,v_6\in V(\dH)\setminus\{v_1,v_2,v_3,v_4\}$ such that $e_2=v_1v_2v_5$ and $e_3=v_1v_4v_6$. A contradiction follows easily, since $(v_4,v_6)$ is good and the three edges containing $v_1$ are $e_1$, $e_2$, and $e_3$.   
    
    If $|e_1\cap e_3|=|e_2\cap e_3|=2$, then we have $e_2=v_1v_2v_4$ and $e_3=v_1v_3v_4$. Since $(v_2,v_4)$ is good and $\{v_2,v_4\}= e_2\cap e_4$, the core of the new Berge-$K_4$ in $\dH+v_2v_4$ must contain $v_1$ and the vertex in $e_4\setminus \{v_2,v_4\}$. As $N_{\dH}(v_1)=\{v_2,v_3,v_4\}$, the core must be $\{v_1,v_2,v_3,v_4\}$, which implies $e_4=v_2v_3v_4$. Similarly, by $(v_3,v_4)$ is good, we have that $e_5=v_2v_3v_4=e_4$, which is a contradiction. 

    We only need to consider the cases where either $|e_1\cap e_3|=2$, $|e_2\cap e_3|=1$, or $|e_1\cap e_3|=1$, $|e_2\cap e_3|=2$. 
    Without loss of generality, we may assume that $|e_1\cap e_3|=2$, $|e_2\cap e_3|=1$ and there is vertex $v_5\in E(\dH)$ such that $e_2=v_1v_2v_5$ and $e_3=v_1v_3v_4$. 
    Let $E_1=E(\dH)\setminus\{e_1,e_2,e_3\}$. If there are three hyperedges in $E_1$ that form a Berge-$K_3$ with core in $\{\{v_2,v_3,v_4\},\{v_2,v_3,v_5\}, \{v_2,v_4,v_5\}, \{v_3,v_4,v_5\}\}$, then one can easily find a Berge-$K_4$ in $\dH$, which is a contradiction. Henceforth, we assume that no such triple of hyperedges in $E_1$ exists. Since $(v_1,v_2)$ is good and $v_1v_2v_3,v_1v_2v_5 \in E(\dH)$, there are two hyperedges $e_6,e_7\in E_1$ such that $v_2,v_5\in e_6$ and $v_3,v_5\in e_7$. 
    Since $(v_3,v_4)$ is good and $N_{\dH}(v_1)=\{v_2,v_3,v_4,v_5\}$, the core of the new Berge-$K_4$ should be $\{v_3,v_4,v_1,v_2\}$ or $\{v_3,v_4,v_1,v_5\}$, which implies that there are two hyperedges $e_8,e_9\in E_1$ such that $v_3,v_2\in e_8$ and $v_4,v_2\in e_9$ or $v_3,v_5\in e_8$ and $v_4,v_5\in e_9$. 
    If $v_3,v_2\in e_8$, we claim that $e_8\in \{e_4,e_5\}$ and $e_8\in \{e_6,e_7\}$, this implies that $e_8=v_2v_3v_4=v_2v_3v_5$, which is a contradiction. Indeed, if $e_8\notin \{e_4,e_5\},$ then $e_4,e_5,e_8\in E_1$ can form a Berge-$K_3$ with core $\{v_2,v_3,v_4\}$, and if $e_8\notin \{e_6,e_7\},$ then $e_6,e_7,e_8\in E_1$ can form a Berge-$K_3$ with core $\{v_2,v_3,v_5\}$, both will induce contradictions. 
    Thus we have $v_3,v_5\in e_8$ and $v_4,v_5\in e_9$. Since $E_1$ does not contains a Berge-$K_3$ with core in $\{\{v_2,v_4,v_5\}, \{v_3,v_4,v_5\}\}$, $e_4, e_6, e_9$ can not be three distinct hyperedges. Hence we have $v_2v_4v_5\in E_1$. We also have that $e_5, e_8, e_9$ can not be three distinct hyperedges, which implies $v_3v_4v_5\in E_1$. Hence $\dH[\{v_1, v_2, v_3, v_4, v_5\}]$ contains a $\dC^3_5$ with hyperedges $\{v_1v_2v_5, v_2v_4v_5, v_3v_4v_5, v_1v_3v_4, v_1v_2v_3\}$, which completes the proof of Lemma~\ref{lem1:tight C5}.
\end{proof}

\begin{lem}\label{lem2:delta(H) geq 2}
    Let $n\geq 7$ be an integer and $\dH$ be a Berge-$K_4$-saturated $3$-graph on $n$ vertices with the minimum number of hyperedges, then we have $\delta(\dH)\geq 2$.
\end{lem}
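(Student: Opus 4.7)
The plan is to argue by contradiction: suppose $v \in V(\dH)$ with $d_\dH(v) \le 1$, and split into the cases $d_\dH(v) = 0$ and $d_\dH(v) = 1$, deriving in each case a non-hyperedge whose addition cannot produce a Berge-$K_4$.

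First I would consider $d_\dH(v) = 0$. An initial step rules out a second isolated vertex: if $v' \neq v$ were also isolated, then for any $u$ the triple $\{v, v', u\}$ is a non-hyperedge whose covered pair in any putative new Berge-$K_4$ must have both endpoints in the core, forcing $v$ or $v'$ into a core vertex, impossible by degree. Hence $V \setminus \{v\}$ consists entirely of non-isolated vertices. Next, for every pair $\{u_1, u_2\} \subseteq V \setminus \{v\}$, the saturation condition applied to the non-hyperedge $\{v, u_1, u_2\}$ forces the Berge-$K_4$ to have core avoiding $v$ and covered pair $\{u_1, u_2\}$, so $(u_1, u_2)$ is good. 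Applying Fact~\ref{Fact1}(ii) to every hyperedge (all lying in $V \setminus \{v\}$) then shows every vertex in any hyperedge has degree at least $3$, which combined with the upper bound $e(\dH) \le n$ from Constructions~\ref{Construction: odd n} and~\ref{Construction: even n} constrains $e(\dH) \in \{n-1, n\}$.

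To extract the contradiction, I would pick $u \in V \setminus \{v\}$ with $d_\dH(u) = 3$, which exists by averaging since $3n < 4(n-1)$ for $n \ge 5$. Because all adjacent pairs and all pairs of the form $(u, v_k)$ are good, Lemma~\ref{lem1:tight C5} produces a tight $5$-cycle $\dC_5^3$ on a set $\{u, v_2, v_3, v_4, v_5\}$, and $d_\dH(u) = 3$ ensures every hyperedge incident to $u$ lies inside this $\dC_5^3$. The excess-degree budget $\sum_w d_\dH(w) - 3(n-1) \le 3$ bounds the number of extra hyperedges touching $\{v_2, v_3, v_4, v_5\}$, so for $n$ sufficiently large one can choose $u' \in V \setminus (\{v, u, v_2, v_3, v_4, v_5\})$ whose hyperedges are disjoint from $\{v_2, v_3, v_4, v_5\}$. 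Now the non-hyperedge $\{v, u, u'\}$ must create a Berge-$K_4$ whose core excludes $v$, forcing the covered pair to be $\{u, u'\}$ and the core to be $\{u, u', x, y\}$. But any hyperedge of $\dH$ covering $\{u, x\}$ forces $x \in \{v_2, v_3, v_4, v_5\}$, while any hyperedge covering $\{u', x\}$ forces $x \notin \{v_2, v_3, v_4, v_5\}$, a contradiction.

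The case $d_\dH(v) = 1$, where $v$ lies in the unique hyperedge $e_0 = \{v, a, b\}$, would be handled by the same template. A parallel analysis shows every pair in $V \setminus \{v\}$ other than $\{a, b\}$ is good, $(v, \cdot)$ is bad for every vertex (since $v$ cannot populate three pairs of a core), $(a, b)$ is bad via Fact~\ref{Fact1}(ii) applied to $e_0$, and vertices outside $\{v, a, b\}$ have degree at least $3$. Lemma~\ref{lem1:tight C5} is applied to a suitable degree-$3$ vertex disjoint from $\{a, b\}$ to produce a $\dC_5^3$, and the analogous non-hyperedge $\{v, u, u'\}$ is chosen to contradict saturation. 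The main obstacle in both cases is the careful selection of the ``outside'' vertex $u'$ whose hyperedges avoid the $\dC_5^3$; this is where the global edge-budget $e(\dH) \le n$ is crucial, and for smaller $n$ one must track the excess-degree accounting more delicately to guarantee a valid $u'$.
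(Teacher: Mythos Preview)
Your broad strategy---split into $d_{\dH}(v)=0$ and $d_{\dH}(v)=1$, force degree at least $3$ elsewhere, invoke Lemma~\ref{lem1:tight C5} to produce a $\dC_5^3$, then exploit it---matches the paper, and your degree-$0$ endgame is essentially the contrapositive of the paper's: where you look for an outside vertex $u'$ whose hyperedges miss $\{v_2,v_3,v_4,v_5\}$ and argue that the triple $\{v,u,u'\}$ then fails saturation, the paper instead observes that \emph{every} outside vertex must be adjacent to two of $\{v_2,\ldots,v_5\}$ (precisely because each pair $(u,u')$ is good) and counts edges to exceed~$n$. Both routes work once $n$ is moderately large; the paper covers $n\in\{7,8,9\}$ by ad~hoc counting, which your sketch explicitly leaves open.

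The genuine gap is in the degree-$1$ case. To apply Lemma~\ref{lem1:tight C5} at a degree-$3$ vertex $w$, you need every pair in $N^d_{\dH}(w)$ to be good; since $(a,b)$ is bad, this forces $wab\notin E(\dH)$, not merely $w\notin\{a,b\}$. Guaranteeing such a $w$ exists is not automatic: it requires the bounds $d_{\dH}(a)+d_{\dH}(b)\le 8$ (hence $d_{\dH}(a,b)\le 4$) and a case split on $d_{\dH}(a,b)$ together with the location of the at-most-two vertices of degree exceeding $3$---exactly the analysis the paper carries out. More substantially, the paper's degree-$1$ endgame differs from yours: rather than hunting for a disconnected $u'$, it proves directly (Claim~\ref{no C_5^3}, a nontrivial case analysis on how $\{a,b\}$ can meet a putative tight $5$-cycle) that $\dH$ contains \emph{no} $\dC_5^3$, and then obtains one from Lemma~\ref{lem1:tight C5}. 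This yields an immediate contradiction that works uniformly for all $n\ge 7$ and avoids the small-$n$ bookkeeping your approach would still owe.
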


\begin{proof}
	By Constructions~\ref{Construction: odd n} and \ref{Construction: even n}, we have $e(\dH)\leq n$. 

    We first prove that $\delta(\dH)\geq 1$. Suppose for the contrary that there is a vertex $v_0\in V(\dH)$ with $d_{\dH}(v_0)=0$, then $v_0v_iv_j\notin E(\dH)$ implies that $(v_i,v_j)$ is good for every pair $(v_i,v_j)\in (V(\dH)\setminus \{v_0\})\times (V(\dH)\setminus \{v_0\})$. Let $\dH_0=\dH-\{v_0\}$. Since every pair $(v_i,v_j)\in V(\dH_0)\times V(\dH_0)$ is good, we have $\delta(\dH_0)\geq 2$. 
    Moreover, for any vertex $v_i\in V(\dH_0)$, let $v_iv_jv_k\in E(\dH)$ be a hyperedge incident to $v_i$. Since $(v_j,v_k)$ is good in $\dH$, the core of any Berge-$K_4$ in $\dH+v_jv_k$ must contain $v_i$, which implies $d_{\dH_0}(v_i)=d_{\dH}(v_i)\geq 3$, and hence $\delta(\dH_0)\geq 3$.
    We claim that $\delta(\dH_0)=3$. Indeed, if $\delta(\dH_0)\geq 4$, then we have $e(\dH)=e(\dH_0)\geq \left\lceil\frac{4(n-1)}{3}\right\rceil\geq n+1$ for $n\geq 5$, which contradicts that $e(\dH)\leq n$. 
    
    Let $v_1\in V(\dH_0)$ be the vertex with $d_{\dH_0}(v_1)=3$. Since any pair $(v_i,v_j)\in V(\dH_0)\times V(\dH_0)$ is good, Lemma~\ref{lem1:tight C5} implies that there are four vertices $v_2,v_3,v_4,v_5\in V(\dH)$ such that $v_1v_2v_3, v_2v_3v_4$, $v_3v_4v_5, v_4v_5v_1, v_1v_2v_5\in E(\dH)$. In other words, $v_1,v_2,v_3,v_4,v_5$ form a $\dC^3_5$.
    If $n=7$, then there is a vertex $v_6\in V(\dH_0)\setminus\{v_1,v_2,v_3,v_4,v_5\}$. Since $d_{\dH}(v_6)\geq\delta(\dH_0)\geq 3$, there are at least three hyperedges in $E(\dH)\setminus\{v_1v_2v_3, v_2v_3v_4, v_3v_4v_5, v_4v_5v_1, v_1v_2v_5\}$, thus $e(\dH)\geq 5+3=n+1$, a contradiction.
    If $n=8$, then there are two vertices $v_6,v_7\in V(\dH_0)\setminus\{v_1,v_2,v_3,v_4,v_5\}$. Since $(v_6,v_7)$ is good, by Fact~\ref{Fact1} (i), we have $d_{\dH}(v_6,v_7)\leq 2$. Thus $d_{\dH}(v_6), d_{\dH}(v_7)\geq\delta(\dH_0)\geq 3$ implies that there are at least four hyperedges that contain $v_6$ or $v_7$. Hence $e(\dH)\geq 5+4=n+1$, a contradiction.
    If $n=9$, then there are three vertices $v_6,v_7,v_8\in V(\dH_0)\setminus\{v_1,v_2,v_3,v_4,v_5\}$. Since $(v_6,v_7)$, $(v_7,v_8)$ and $(v_6,v_8)$ are good, by Fact~\ref{Fact1} (i), we have $d_{\dH}(v_6,v_7), d_{\dH}(v_7,v_8), d_{\dH}(v_6,v_8)\leq 2$. Moreover, if $d_{\dH}(v_6,v_7)=1$, then there are at least five hyperedges that contain $v_6$ or $v_7$, which implies that $e(\dH)\geq 5+5=10=n+1$, a contradiction. Thus $d_{\dH}(v_6,v_7)=d_{\dH}(v_7,v_8)=d_{\dH}(v_6,v_8)=2$ and there are at most four hyperedges which contain at least one vertex in $\{v_6,v_7,v_8\}$ and moreover, we have $v_6v_7v_8\in E(\dH)$. Otherwise there are at least six distinct hyperedges which contain exact two vertices in $\{v_6,v_7,v_8\}$, a contradiction. Since $(v_6,v_7)$ is good and $v_6v_7v_8\in E(\dH)$, the core of Berge-$K_4$ must contain $v_6$, $v_7$ and $v_8$, which means there are at least five hyperedges which contain at least one vertex in $\{v_6,v_7,v_8\}$, a contradiction.
    If $n\geq 10$, by the previous analysis we know that $(v_1,v_i)$ is good for any $v_i\in V(\dH_0)\setminus\{v_1,v_2,v_3,v_4,v_5\}$. Since $d_{\dH}(v_1)=3$ and $v_1v_2v_3, v_4v_5v_1, v_1v_2v_5\in E(\dH)$, the core of any new Berge-$K_4$ in $\dH+v_1v_i$ must be $\{v_1, v_i, v_{j_1}, v_{j_2}\}$ for some $v_{j_1}, v_{j_2}\in \{v_2,v_3,v_4,v_5\}$. Thus there are two hyperedges $e_1, e_2\in E(\dH)$ such that $v_i, v_{j_1}\in e_1$ and  $v_i, v_{j_2}\in e_2$, which means that there are at least
    $|V(\dH\setminus\{v_0,v_1,v_2,v_3,v_4,v_5\})|=n-6$
    distinct hyperedges outside the $\dC^3_5$ with at least one vertex in $\{v_2,v_3,v_4,v_5\}$.
    In particular, we have $\sum_{j=2}^{5}d_{\dH}(v_j)\geq \sum_{j=2}^{5}d_{\dC^3_5}(v_j)+n-6\geq 3|\{v_2,v_3,v_4,v_5\}|+n-6$, which implies that $\sum_{v\in V(H_0)}d_{\dH}(v)\geq 3|V(\dH_0)|+n-6$. Hence $e(\dH)\geq\left\lceil\frac{3(n-1)+n-6}{3}\right\rceil\geq n+1$, where the last inequality holds when $n\ge 10$,  which a contradiction to $e(\dH)\le n$. 
\medskip

    Next, we prove $\delta(\dH)\geq 2$. Suppose for the contrary that there is a vertex $u_1\in V(\dH)$ with $d_{\dH}(u_1)=1$, let $u_1u_2u_3\in E(\dH)$ be the hyperedge incident to $u_1$. Clearly, $(u_1,u_i)$ is bad for every $u_i\in V(\dH)\setminus\{u_1\}$. Thus for any pair $(u_i,u_j)\neq(u_2,u_3)$ with $u_i,u_j\in V(\dH)\setminus\{u_1\}$, $u_1u_iu_j\notin E(\dH)$ implies that $(u_i,u_j)$ is good, and hence $d_{\dH}(u_i)\geq 2$ for every $u_i\in V(\dH)\setminus\{u_1\}$. 
    Moreover, for every $u_i\in V(\dH)\setminus\{u_1\}$, there is at least one pair $(u_j,u_k)\neq(u_2,u_3)$ such that $u_iu_ju_k\in E(\dH)$. Since $(u_j,u_k)$ is good, it follows that $d_{\dH}(u_i)\geq 3$. 
    Furthermore, there are at most two vertices have degree more than $3$, otherwise, if there are three vertices having degree at least $4$, then $e(\dH)\geq \left\lceil\frac{1+3(n-1-3)+3\times 4}{3}\right\rceil=\left\lceil\frac{3n+1}{3}\right\rceil=n+1$, which contradicts that $e(\dH)\leq n$.
    Similarly, we have that for any two vertices $u_i,u_j\in V(\dH)\setminus\{u_1\}$, $d_{\dH}(u_i)+d_{\dH}(u_j)\leq 8$. Otherwise, $e(\dH)\geq \left\lceil\frac{1+3(n-1-2)+9}{3}\right\rceil=n+1$, which also contradicts our assumption that $e(\dH)\leq n$.
    
    \begin{clm}\label{no C_5^3}
        There is no 3-uniform tight 5-cycle in $\dH$.
    \end{clm}
    \begin{proof}
    Suppose that there is a 3-uniform tight 5-cycle $\dC$ with $V(\dC)=\{w_1,w_2,w_3,w_4,w_5\}$ and $E(\dC)=\{w_1w_2w_3, w_2w_3w_4, w_3w_4w_5, w_4w_5w_1, w_5w_1w_2\}$ in $\dH$. Since there are at most two vertices have degree more than 3, we may assume that $d_{\dH}(w_2)=d_{\dH}(w_3)=d_{\dH}(w_4)=3$.
    
    If $u_2,u_3\in \{w_1,w_2,w_3,w_4,w_5\}$, then $\dC+u_1u_2u_3$ contains a Berge-$K_4$, a contradiction to $\dH$ is Berge-$K_4$-free. 
    Thus $|V(\dC)\cap \{u_2,u_3\}|\leq 1$. 
    
    If $V(\dC)\cap\{u_2,u_3\}=\emptyset$, the previous analysis tells us that $(w_2,u_2)$ and $(w_2, u_3)$ are good. Since $N_{\dH}(w_2)=\{w_1,w_3,w_4,w_5\}$, the core of any Berge-$K_4$ in $\dH+w_2u_2$ must composed of $w_2,u_2$ and two vertices from $\{w_1,w_3,w_4,w_5\}$. Assume that a core is $\{w_2,u_2,w_{i_1},w_{i_2}\}$ for some $w_{i_1},w_{i_2}\in \{w_1,w_3,w_4,w_5\}$. Then there are two hyperedges $e_3,e_4$ such that $w_{i_1},u_2\in e_3$ and $w_{i_2},u_2\in e_4$, thus $d_{\dH}(w_{i_1}),d_{\dH}(w_{i_2})\geq 4$. It implies that $(w_{i_1},w_{i_2})=(w_1,w_5)$. 
    On the other hand, since $(w_2, u_3)$ is also good, there are two hyperedges $e_5,e_6$ such that $u_3,w_1\in e_5$ and $u_3,w_5\in e_6$. Then we claim that $e_3=e_5=w_1u_2u_3$, $e_4=e_6=w_5u_2u_3$, and $d_{\dH}(w_1)=d_{\dH}(w_5)=4$, and any vertex in $V(\dH)\setminus\{u_1, w_1, w_5\}$ has degree $3$. Otherwise, one can easily obtain that $3e(\dH)\ge 3(n-1)+1+3=3n+1$, which implies that $e(\dH)\geq n+1$, a contradiction to $e(\dH)\leq n$.
    Furthermore, since $(w_1,u_2)$ is good and $N_{\dH}(u_2)=\{u_1,u_3,w_1,w_5\}$, the core of any new Berge-$K_4$ in $\dH+w_1u_2$ must be $\{w_1,u_2,u_3,w_5\}$. But $w_5u_2u_3$ can not be mapped to both $w_5u_2$ and $w_5u_3$, a contradiction. 
    
    Therefore, $|V(\dC)\cap\{u_2,u_3\}|=1$. We may assume that $u_2\in V(\dC)$, and clearly we have $d_{\dH}(u_2)\geq 4$. Moreover, $d_{\dH}(w_2)=d_{\dH}(w_3)=d_{\dH}(w_4)=3$ implies that none of $w_2$, $w_3$, or $w_4$ can be adjacent to $u_3$. Without loss of generality, we assume $w_1=u_2$. Since $(w_2,u_3)$ is good, the core of the Berge-$K_4$ in $\dH+w_2u_3$ must contains $w_1=u_2$ and $w_5$. Hence $d_{\dH}(w_5)=d_{\dH}(u_2)=4$ and there is a vertex $x\in V(\dH)$ such that $u_3xw_5\in E(\dH)$. On the other hand, since $(w_5,u_3)$ is good and $N_{\dH}(w_5)=\{u_2,w_2,w_3,w_4,u_3,x\}$, the core of any Berge-$K_4$ in $\dH+w_5u_3$ must be $\{u_3,w_5,x,u_2\}$. By considering $N_{\dH}(u_2)=\{u_1,u_3,w_2,w_3,w_4,w_5\}$, we have $x\in \{w_2,w_3,w_4\}$. Together with $u_3xw_5\in E(\dH)$, it follows that $d_{\dH}(x)=4$, which contradicts $d_{\dH}(w_2)=d_{\dH}(w_3)=d_{\dH}(w_4)=3$. Thus, there is no $\dC_5^3$ in $\dH$.
    \end{proof}
    
    Recall that $(u_i,u_j)$ is good for any pair $(u_i,u_j)\neq(u_2,u_3)$ with $u_i,u_j\in V(\dH)\setminus\{u_1\}$, thus if there is a vertex $w\in V(\dH)\setminus\{u_1,u_2,u_3\}$ such that $d_{\dH}(w)=3$ and $wu_2u_3\not\in E(\dH)$, then every pair $(v_i,v_j)\in N^d_{\dH}(w)$ is good, and $(w,v_k)$ is good for every $v_k\in N_{\dH}(w)$. Hence Lemma~\ref{lem1:tight C5} implies that there is a $\dC^3_5$ in $\dH$. In the following, we prove that there is a such vertex in $V(\dH)$, which is a contradiction to Claim~\ref{no C_5^3}.
    As shown before, we have $d_{\dH}(u_i)\ge 3$ for $u_i\in V(\dH)\setminus\{u_1\}$, and the number of vertices with degree more than $3$ is at most $2$. Moreover, we have $d_{\dH}(u_2)+d_{\dH}(u_3)\leq 8$, which implies $d_{\dH}(u_2,u_3)\leq 4$.
    If $d_{\dH}(u_2,u_3)=4$, then by $n\geq 7$, there is a vertex $u_7$ with $d_{\dH}(u_7)=3$ and $u_7u_2u_3\not\in E(\dH)$, a contradiction.
    If $d_{\dH}(u_2,u_3)=3$, then for $n\geq 8$, there is a vertex $u_8$ such that $d_{\dH}(u_8)=3$ and $u_8u_2u_3\not\in E(\dH)$, a contradiction. For $n=7$, suppose that $V(\dH)=\{u_1,...,u_7\}$ and assume that $u_1u_2u_3, u_2u_3u_4, u_2u_3u_5\in E(\dH)$, then one can easily obtain a contradiction in the case where $d_{\dH}(u_6)=3$ or $d_{\dH}(u_7)=3$. Thus we assume $d_{\dH}(u_6)=d_{\dH}(u_7)=4$, it follows that $d_{\dH}(u_4)=d_{\dH}(u_5)=3$. In this case, it is easy to see that $N_{\dH}(u_6)\subseteq\{u_4,u_5,u_7\}$, which implies that $d_{\dH}(u_6)\le 3$, a contradiction.
    If $d_{\dH}(u_2,u_3)\le2$, then by $n\geq 7$, there is a vertex $u_7$ such that $d_{\dH}(u_7)=3$ and $u_7u_2u_3\not\in E(\dH)$, a contradiction.
    This completes the proof of Lemma~\ref{lem2:delta(H) geq 2}.
\end{proof}

\subsection{Proof of the lower bound for \texorpdfstring{$n\ge 96$}{}}\label{Sec:lower bound n large}
Equipped with the tools above, we now prove the lower bound. Let $n\geq 96$ be an integer and $\dH$ be an $n$-vertex $3$-uniform Berge-$K_4$-saturated hypergraph with the minimum number of edges.

We define the following vertex partition: $X=\{v\in V(\mathcal{H}): d_{\mathcal{H}}(v)\geq 3\}$, $A=\{v\in V(\mathcal{H})\setminus X: \mbox{every edge containing }v \mbox{ intersects X}\}$ and $B=V(\mathcal{H})\setminus (X\cup A)$. 
By Lemma~\ref{lem2:delta(H) geq 2}, $\delta(\dH)\geq 2$, thus every vertex in $A\cup B$ has degree exactly $2$. Moreover, by the definition of $A$, there is no hyperedge in $\dH[A]$.
The proof proceeds in two steps. 
First we prove that $B$ is empty.
Then we perform a case analysis according to the types of hyperedges between $X$ and $A$ to get the desired lower bound of $e(\dH)$.

\begin{lem}\label{cla:B is empty}
      $B=\emptyset$.
\end{lem}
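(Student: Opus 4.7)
The plan is to assume $B \neq \emptyset$ and obtain a contradiction by exhibiting a non-hyperedge whose three pairs are all bad, which conflicts with saturation. If $v \in B$, then by Lemma~\ref{lem2:delta(H) geq 2} we have $d_{\dH}(v) = 2$, and the definition of $B$ supplies an edge $e = \{v, a, b\} \in E(\dH)$ with $e \cap X = \emptyset$. Combined with $\delta(\dH) \geq 2$, this forces $d_{\dH}(a) = d_{\dH}(b) = 2$, so $\{v, a, b\}$ is a hyperedge whose three vertices all have degree exactly $2$ --- precisely the configuration in which Fact~\ref{Fact1}(ii) applies in its strongest form.

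First I would apply Fact~\ref{Fact1}(ii) to the edge $\{v, a, b\}$ to conclude that the pair $(a, b)$ is bad, and that the pairs $(a, u)$ and $(b, u)$ are bad for every $u \in V(\dH) \setminus \{v, a, b\}$. A short codegree count then shows that at most one vertex $u \in V(\dH) \setminus \{v, a, b\}$ can satisfy $\{a, b, u\} \in E(\dH)$, since $d_{\dH}(a, b) \leq d_{\dH}(a) = 2$ and one of the (at most two) hyperedges containing $\{a, b\}$ is the known edge $\{v, a, b\}$ itself. Because $n \geq 96$, the set $V(\dH) \setminus \{v, a, b\}$ is large enough to contain some vertex $u$ with $\{a, b, u\} \notin E(\dH)$.

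To finish, I would invoke saturation: adding the non-hyperedge $\{a, b, u\}$ to $\dH$ must produce a new Berge-$K_4$, which necessarily uses the new hyperedge $\{a, b, u\}$ itself to cover a pair of its $K_4$-core drawn from $\{(a, b), (a, u), (b, u)\}$. By the same substitution reasoning used inside the proof of Fact~\ref{Fact1}(i), the remaining five hyperedges in $\dH$ together with the ``added Berge-edge'' for the chosen pair $(x, y)$ then exhibit a Berge-$K_4$ in $\dH + xy$, showing that $(x, y)$ is good --- contradicting the conclusion of the previous paragraph. I do not foresee any real obstacle; the argument is a short combination of Fact~\ref{Fact1}(ii) with the standard dictionary between saturation and good/bad pairs (if all three pairs of a non-hyperedge were bad, its addition could not create a Berge-$K_4$). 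The hypothesis $n \geq 96$ is barely used here: any $n \geq 5$ already guarantees the existence of the required $u$, which is why this lemma slots in naturally before the more delicate edge-counting arguments that will actually exploit the large-$n$ assumption.
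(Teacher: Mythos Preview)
Your proposal is correct and follows essentially the same route as the paper: both arguments use Fact~\ref{Fact1}(ii) on the edge $\{v,a,b\}$ of three degree-$2$ vertices to conclude that every pair among $\{v,a,b,u\}$ (for any outside vertex $u$) is bad, and then invoke the standard dictionary ``all three pairs of a non-hyperedge bad $\Rightarrow$ saturation fails.'' The only cosmetic difference is in how the contradiction is packaged: the paper applies this to the triples $\{v,a,u\}$ and $\{v,b,u\}$ simultaneously, forcing $vau, vbu \in E(\dH)$ and hence $d_{\dH}(v)\ge 3$, whereas you apply it to the triple $\{a,b,u\}$ and use the codegree bound $d_{\dH}(a,b)\le 2$ to find a $u$ with $abu\notin E(\dH)$.
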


\begin{proof}
    Suppose for the contrary that there is a vertex $v\in B$, then there are two vertices $v_1, v_2\in V(\dH)\setminus X$ such that $vv_1v_2\in E(\dH)$. Since $d_{\dH}(v)=d_{\dH}(v_1)=d_{\dH}(v_2)=2$, by Fact~\ref{Fact1} (ii), we have $(v,v_1)$, $(v,v_2)$, $(v_1,v_2)$, $(v,u)$, $(v_1,u)$ and $(v_2,u)$ are bad for every $u\in V(\dH)\setminus \{v,v_1,v_2\}$. Hence for every $u\in V(\dH)\setminus \{v,v_1,v_2\}$, $\dH$ is Berge-$K_4$-saturated implies that $vv_1u, vv_2u\in E(\dH)$, and thus $d_{\dH}(v)>2$, a contradiction. 
\end{proof}

Note that Lemma~\ref{cla:B is empty} implies that $|A|+|X|=n$. If $|A|\leq 2$, then $$e(\dH)\geq \left\lceil\frac{3|X|+2|A|}{3}\right\rceil=\left\lceil\frac{3n-|A|}{3}\right\rceil\geq n,$$ and we are done. So we may assume that $|A|\geq 3$.

All hyperedges between $A$ and $X$ can be divided into two types: 
\textbf{Type~1} consists of those containing exactly two vertices from $A$ and one vertex from $X$;
\textbf{Type~2} consists of those containing exactly one vertex from $A$ and two vertices from $X$.

Consider a vertex $a_1 \in A$ such that the two hyperedges containing $a_1$, say $e_1$ and $e_2$, are both of Type~1. 
Then $(e_1,e_2)$ must fall into one of the following three patterns:
$$
    (a)\ e_1=a_1a_2x_1\ \text{and}\ e_2=a_1a_2x_2;
    (b)\ e_1=a_1a_2x_1\ \text{and}\ e_2=a_1a_3x_1;
    (c)\ e_1=a_1a_2x_1\ \text{and}\ e_2=a_1a_3x_2,
$$
where $a_2,a_3\in A$ and $x_1,x_2\in X$.

The next lemma shows that whenever a vertex $a_1\in A$ has both of its incident hyperedges of Type~1, we may restrict our attention to Case~(c).

\begin{lem}\label{Lem:no case a,b}
    Suppose there is a vertex $a_1 \in A$ such that both of its incident hyperedges are of Type~1. 
    If Case~(a) occurs, then $e(\dH)\le n.$ Moreover, Case~(b) can never occur.
\end{lem}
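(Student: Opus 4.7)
I would split the lemma into its two assertions and handle them separately.

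For Case~(a), observe that with $e_1=a_1a_2x_1$ and $e_2=a_1a_2x_2$, the four vertices $\{a_1,a_2,x_1,x_2\}$ together with $\{e_1,e_2\}$ form a copy of the gadget $\dT$ from Section~\ref{Sec: Upper bound} attached to $\dH-\{a_1,a_2\}$ at the pair $(x_1,x_2)$. Since $d_\dH(a_1)=d_\dH(a_2)=2$, neither vertex can appear in the core of any Berge-$K_4$ (each would need Berge-degree three there), so for every non-edge triple $T'\subseteq V(\dH)\setminus\{a_1,a_2\}$ the Berge-$K_4$ forced by saturation avoids the hyperedges $e_1,e_2$ and lives inside the smaller hypergraph; consequently $\dH-\{a_1,a_2\}$ is again Berge-$K_4$-saturated. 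Iterating the peeling and invoking the second assertion of Lemma~\ref{Lemma: Add T} (which forces all $\dT$-gadgets of $\dH$ to share one pair) leaves a saturated base $\dH_0$ on $n-2k$ vertices satisfying $e(\dH)=e(\dH_0)+2k$. The extremality of $\dH$ propagates to $\dH_0$ (otherwise one could replace $\dH_0$ by a smaller extremal hypergraph and, by Lemma~\ref{Lemma: Add T}, obtain a saturated hypergraph on $n$ vertices with strictly fewer edges, contradicting the minimality of $e(\dH)$), so combining with the upper-bound Constructions~\ref{Construction: odd n} and~\ref{Construction: even n} applied at size $n-2k$ yields $e(\dH_0)\le n-2k$ and hence $e(\dH)\le n$.

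For Case~(b), I aim for a direct contradiction. With $e_1=a_1a_2x_1$ and $e_2=a_1a_3x_1$, each of $a_1,a_2,a_3$ has degree exactly $2$, and because $a_2,a_3\in A$ the second hyperedges $e_1'$ of $a_2$ and $e_2'$ of $a_3$ must meet $X$. Pick a vertex $y\in V(\dH)\setminus\{a_1,a_2,a_3,x_1\}$ (available since $n\ge 96$) and consider the triple $T=\{a_1,a_2,y\}$; this is a non-edge because the only hyperedge of $\dH$ containing both $a_1$ and $a_2$ is $e_1$. Saturation then yields a Berge-$K_4$ in $\dH+T$ with some core $C$, and $T$ must cover a pair inside $C$. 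A case analysis on the admissible cores (namely $\{a_1,a_2,a_3,y\}$, $\{a_1,a_2,x_1,y\}$, and $\{a_1,a_2,a_3,x_1\}$, together with symmetric variants) shows that in each case some degree-$2$ vertex among $a_1,a_2,a_3$ would be required to participate in three distinct hyperedges of the Berge-$K_4$, while only two of its hyperedges in $\dH$ (plus possibly $T$) are available; the Berge bijection is thus infeasible, yielding the desired contradiction.

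The principal obstacle will be the exhaustive bookkeeping in Case~(b): for each admissible core, I must enumerate the ways to align the five available hyperedges $e_1,e_2,e_1',e_2',T$ with the six $K_4$-pairs and rule each alignment out, while respecting the injectivity of the Berge bijection and the tight degree constraints on $a_1,a_2,a_3$. A secondary subtlety in Case~(a) is ensuring that the peeling terminates at a base $\dH_0$ admitting no further $\dT$-gadget, so that the identity $e(\dH_0)+2k=n$ is sharp; for this I would appeal to the explicit small base hypergraphs in Section~\ref{Sec: Upper bound} and, where needed, to the computer classification in~\ref{Sec:Appendix A}.
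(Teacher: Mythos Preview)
Your proposal has genuine gaps in both parts.

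\textbf{Case~(a).} The statement as printed contains a typo: the conclusion should read $e(\dH)\ge n$, not $e(\dH)\le n$. The upper bound $e(\dH)\le n$ is already given by Constructions~\ref{Construction: odd n} and~\ref{Construction: even n}, so what you prove is vacuous. The paper establishes the intended inequality $e(\dH)\ge n$ by a direct count: for each $v\notin\{a_1,a_2,x_1,x_2\}$, saturation on the triple $a_1a_2v$ forces the core $\{a_i,v,x_1,x_2\}$, hence hyperedges $e_v^1\ni\{v,x_1\}$ and $e_v^2\ni\{v,x_2\}$; summing gives $d_\dH(x_1)+d_\dH(x_2)\ge n$, and combining with $|X|\ge 4$ (from $(x_1,x_2)$ good) yields $e(\dH)\ge n$. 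Even for the direction you attempted, your extremality-propagation step is broken: to replace $\dH_0$ by a smaller extremal $\dH_0'$ and rebuild you would need $\dH_0'$ to admit $\dT$ on the same pair, but Lemma~\ref{Lemma: Add T} gives no such guarantee for an arbitrary extremal hypergraph (indeed Table~\ref{Table: Extremal hypergraphs} lists several that admit no $\dT$ at all).

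\textbf{Case~(b).} Your core enumeration is not exhaustive. All the cores you list contain both $a_1$ and $a_2$, but the new triple $T=\{a_1,a_2,y\}$ can equally well serve the pair $(a_2,y)$ in a core $\{a_2,y,x_1,w\}$ with $a_1$ absent, where $w$ is a vertex of the second hyperedge $e_1'$ of $a_2$. In that configuration $a_2$ has precisely the three hyperedges $e_1,e_1',T$ it needs, so your degree-overload argument does not apply; whether such a Berge-$K_4$ exists depends on edges among $y,x_1,w$ in $\dH$, which you have no control over for a single chosen $y$. The paper avoids this by a global argument: since $N_\dH(a_1)=\{a_2,a_3,x_1\}$ contains only one vertex of degree $\ge 3$, the pair $(a_1,v)$ is bad for \emph{every} $v$, so saturation forces every pair $(v_1,v_2)\notin\{(a_2,x_1),(a_3,x_1)\}$ to be good; applying Fact~\ref{Fact1}(ii) to the second hyperedge of $a_2$ then yields an immediate contradiction regardless of whether that hyperedge is of Type~1 or Type~2.
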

\begin{proof}
For the first statement, if Case~(a) occurs, we may assume there are vertices $a_2\in A$, $x_1,x_2\in X$ such that $a_1a_2x_1, a_1a_2x_2\in E(\dH)$.

By Fact~\ref{Fact1} (ii), since $a_1a_2x_1, a_1a_2x_2\in E(\dH)$ and $d_{\dH}(a_1)=d_{\dH}(a_2)=2$, we have $(a_1, a_2)$, $(a_1,x_1)$, and $(a_1,x_2)$ are bad. Since $a_1a_2 v\not\in E(\dH)$ for any $v\in V(\dH)\setminus \{a_1, a_2, x_1, x_2\}$, $(a_1, v)$ or $(a_2, v)$ is good. If $(a_1, v)$ (resp. $(a_2, v)$) is good, then the core of any new Berge-$K_4$ in $\dH+a_1v$ (resp. $\dH+a_2v$) must be $\{a_1,v,x_1,x_2\}$ (resp. $\{a_2,v,x_1,x_2\}$). Thus there are three hyperedges $e_v^1, e_v^2, e_v^3\in E(\dH)$ such that $x_1, v\in e_v^1$, $x_2, v\in e_v^2$ and $x_1, x_2\in e_v^3$. Since each $e_v^1$ and $e_v^2$ can contain at most $2$ vertices outside $\{a_1, a_2, x_1, x_2\}$, we have 
$|\cup_{v\in V(\dH)\setminus \{a_1, a_2, x_1, x_2\}}\{e_v^1\}|,|\cup_{v\in V(\dH)\setminus \{a_1, a_2, x_1, x_2\}}\{e_v^2\}|\geq \left\lceil\frac{n-4}{2}\right\rceil$.
Moreover, since $\{a_1, a_2\}\cap e_v^3=\emptyset$, there is a vertex $v^\prime\in V(\dH)\setminus \{a_1, a_2, x_1, x_2\}$ such that $e_v^3=v^\prime x_1x_2$. Similarly, there is a vertex $v^{\prime\prime}\in V(\dH)\setminus \{a_1, a_2, x_1, x_2\}$ such that $e_{v^\prime}^3=v^{\prime\prime} x_1x_2$. 

If $v^{\prime\prime}\neq v^\prime$, then there are at least two hyperedges from $\cup_{v\in V(\dH)\setminus \{a_1, a_2, x_1, x_2\}}\{e_v^1, e_v^2, e_v^3\}$ containing both $x_1$ and $x_2$. 
If $v^{\prime\prime}=v^\prime$, then $e_{v^\prime}^3\not\in \cup_{v\in V(\dH)\setminus \{a_1, a_2, x_1, x_2\}}\{e_v^1, e_v^2\}$, that is, there is at least one hyperedge in $E(\dH)\setminus(\cup_{v\in V(\dH)\setminus \{a_1, a_2, x_1, x_2\}}\{e_v^1, e_v^2\})$ containing both $x_1$ and $x_2$. 
In both cases, we have $$d_{\dH}(x_1)+d_{\dH}(x_2)\geq 2\left\lceil\frac{n-4}{2}\right\rceil+2+|\{a_1a_2x_1, a_1a_2x_2\}|\geq n-4+2+2=n.$$ 
Combining with $3e(\dH)= \left(\sum_{x\in X\setminus\{x_1,x_2\}}d_{\dH}(x)\right)+d_{\dH}(x_1)+d_{\dH}(x_2)+2|A|$, this implies that
\begin{align*}
    e(\dH)\geq \left\lceil\frac{3(|X|-2)+n+2|A|}{3}\right\rceil
    = \left\lceil\frac{3n+|X|-6}{3}\right\rceil.
\end{align*}
Since $a_1x_1x_2\not\in E(\dH)$, $(a_1,x_1)$ and $(a_1,x_2)$ are bad implies that $(x_1,x_2)$ is good. And the core of every new Berge-$K_4$ in $\dH+x_1x_2$ must contains 4 vertices in $X$, thus $|X|\geq 4$ and therefore $e(\dH)\geq n$.

For the second statement, if Case~(b) occurs, we may assume that there are vertices $a_2, a_3\in A$,  $x_1\in X$ such that $a_1a_2x_1, a_1a_3x_1\in E(\dH)$.  

By Fact~\ref{Fact1} (ii), $a_1a_2x_1, a_1a_3x_1\in E(\dH)$ and $d_{\dH}(a_1)=d_{\dH}(a_2)=d_{\dH}(a_3)=2$ implies that $(a_1,a_2)$, $(a_1,a_3)$, and $(a_1,x_1)$ are bad. Then since $a_1a_2a_3\not\in E(\dH)$, we have $(a_2,a_3)$ is good. Thus there is a hyperedge $e_1$ such that $a_2, x_2\in e_1$ for some $x_2\in X$.
Since $N_{\dH}(a_1)=\{a_2,a_3,x_1\}$ contains only one vertex $x_1$ having degree at least 3, we have that for any $v\in V(\dH)\setminus\{a_1\}$, $(a_1,v)$ is bad. 
So for any pair of vertices $(v_1,v_2)\in V(\dH)\times V(\dH)$ which does not belong to $\{(a_2,x_1), (a_3,x_1)\}$, we have $(v_1,v_2)$ is good. 
However, if $e_1=a_2x_2x_3$ for some $x_3\in X\setminus\{x_1,x_2\}$, then Fact~\ref{Fact1} (ii) implies that $(x_2,x_3)$ is bad, a contradiction. 
If $e_1=a_2 x_2 a_4$ for some $a_4\in A\setminus\{a_1,a_2,a_3\}$, then Fact~\ref{Fact1} (ii) implies that $(a_2,a_4)$ is bad, a contradiction.  
Therefore, Case~(b) never occurs, and we are done.
\end{proof}

\noindent{\textbf{Completing the proof of $e(\dH)\ge n.$}}
For the remainder of the proof, by Lemma~\ref{Lem:no case a,b}, we may assume that no pair of hyperedges corresponds to Case~(a) or Case~(b). 
In order to estimate the number of hyperedges between $A$ and $X$ effectively, we now partition $A$ according to the types of hyperedges containing each vertex.
Let $A_1=\{a\in A: \mbox{both hyperedges containing } a \mbox{ are of Type 1}\}$, $A_2=\{a\in A: a \mbox{ has a neigbor in } A_1\}$, $A_3=\{a\in A\setminus (A_1\cup A_2): \mbox{exactly one hyperedge containing } a \mbox{ is of Type 1}\}$ and $A_4=\{a\in A: \mbox{both hyperedges containing } a \mbox{ are both of Type 2}\}$.

We first show that the sets $A_1,A_2,A_3,A_4$ partition $A$ and establish a lower bound on $|X|.$

\begin{clm}\label{clm: (A_1,A_2,A_3,A_4) partition A, X>25}
     The sets $A_1,A_2,A_3,A_4$ form a partition of $A$, and the number of hyperedges between $A$ and $X$ is at least $\frac{4}{3} |A|.$ 
     Moreover, $|X|\geq \frac{n}{4}\geq 24.$
\end{clm}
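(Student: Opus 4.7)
My plan is to prove the claim in three stages: verify that $A_1, A_2, A_3, A_4$ partition $A$; count the Type~1 hyperedges between $A$ and $X$ using this partition; and combine the count with $e(\dH)\le n$ to obtain the bound on $|X|$.

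For the partition, $A_1$, $A_3$, $A_4$ are pairwise disjoint since they are distinguished by the number of incident Type~1 hyperedges ($2$, $1$, $0$ respectively), and $A_2$ is disjoint from $A_3$ by definition and from $A_4$ because every $A_2$-vertex has at least one Type~1 hyperedge (to its $A_1$-neighbor). The delicate step is $A_1 \cap A_2 = \emptyset$, i.e., no two $A_1$-vertices are adjacent. I plan to argue this by contradiction: if $a, a' \in A_1$ share a hyperedge $\{a, a', x\}$, then by Lemma~\ref{Lem:no case a,b} (Case~(c)) the other hyperedges at $a$ and $a'$ are $\{a, b, y\}$ and $\{a', b', y'\}$ with $b \ne a'$, $b' \ne a$ and $y, y' \ne x$. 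The idea is to exhibit a non-hyperedge whose addition cannot create a Berge-$K_4$, contradicting saturation. The degree constraints $d(a) = d(a') = 2$ force any candidate Berge-$K_4$ to realize all three of $a$'s (and $a'$'s) edges in the core using only the three hyperedges at $a$ (resp.\ $a'$), severely restricting where the core can lie. The triangle sub-case $b = b'$ is simplest and is handled by adding $\{a, a', b\}$ directly and checking that all possible four-vertex cores fail; the generic sub-case uses a non-hyperedge $\{a, a', w\}$ for a suitable $w$ outside $\{a, a', b, b', x, y, y'\}$ (and outside the small neighborhoods needed to block the missing adjacency hyperedges), with a case analysis on the cores in $\{a, a', b, b', x, y, y', w\}$.

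Once the partition holds, the counting is clean. Each $A_2$-vertex is in exactly one Type~1 hyperedge (two would place it in $A_1$), paired with a unique $A_1$-vertex, giving $|A_2| = 2|A_1|$. Each $A_3$-vertex has exactly one Type~1 hyperedge whose partner must also lie in $A_3$ (the possibilities $A_1, A_2, A_4$ are ruled out by the preceding analysis). Hence $t_1 = 2|A_1| + \tfrac12|A_3|$, where $t_1$ denotes the number of Type~1 hyperedges. Combining with the degree identity $2t_1 + t_2 = 2|A|$ and $|A| = 3|A_1| + |A_3| + |A_4|$ gives $t_1 + t_2 = 2|A| - t_1 = 4|A_1| + \tfrac32|A_3| + 2|A_4|$, and a direct comparison with $\tfrac43|A| = 4|A_1| + \tfrac43|A_3| + \tfrac43|A_4|$ yields $t_1 + t_2 - \tfrac43|A| = \tfrac16|A_3| + \tfrac23|A_4| \ge 0$.

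Finally, combining $e(\dH) \le n$ (from the constructions in Section~\ref{Sec: Upper bound}) with $e(\dH) \ge t_1 + t_2 \ge \tfrac43|A|$ forces $|A| \le \tfrac{3n}{4}$, hence $|X| = n - |A| \ge \tfrac{n}{4} \ge 24$ for $n \ge 96$. The main obstacle is the proof of $A_1 \cap A_2 = \emptyset$: the case analysis on the Berge-$K_4$ cores forced by the low degrees at $a, a'$ is the technical heart, with the triangle sub-case requiring its own non-hyperedge; the remaining counting and the deduction of $|X| \ge n/4$ are then routine.
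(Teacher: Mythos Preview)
Your overall architecture is correct and matches the paper: the disjointness checks among $A_1,A_2,A_3,A_4$, the identity $|A_2|=2|A_1|$, the pairing inside $A_3$, the count $t_1+t_2=\tfrac{4}{3}(|A_1|+|A_2|)+\tfrac{3}{2}|A_3|+2|A_4|\ge\tfrac{4}{3}|A|$, and the deduction $|X|\ge n/4$ from $e(\dH)\le n$ are all exactly what the paper does.

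The one substantive divergence is how you prove $A_1\cap A_2=\emptyset$, and there your plan has a gap. You propose to reach a contradiction by finding a non-hyperedge $\{a,a',w\}$ whose addition creates no Berge-$K_4$, with $w$ chosen outside ``small neighborhoods.'' But the bad set for $w$ is not small. In your own case analysis, one live possibility for the core (when $a\in$ core, $a'\notin$ core) is $\{a,w,x,y\}$, where $x,y\in X$ are the $X$-vertices in the two hyperedges at $a$. Blocking this core requires that at least one of the pairs $(w,x),(w,y),(x,y)$ has no hyperedge; if an $(x,y)$-edge exists, you need $w\notin N_{\dH}(x)\cap N_{\dH}(y)$. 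Since $x,y\in X$ may have arbitrarily large degree (nothing so far bounds individual degrees in $X$), this intersection can be of order $n$, and the symmetric core $\{a',w,x,y'\}$ adds $N_{\dH}(x)\cap N_{\dH}(y')$ to the bad set as well. So the existence of a suitable $w$ is not established by your argument as written.

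The paper sidesteps this entirely by arguing \emph{constructively} rather than by contradiction. With $a_1\in A_1$ and Type~1 edges $a_1a_2x_1$, $a_1a_3x_2$, it first observes that $(a_2,a_3)$ is good (since $\{a_1,a_2,a_3\}$ is a non-hyperedge and $(a_1,a_2),(a_1,a_3)$ are bad by Fact~\ref{Fact1}(ii)); analysing the forced core shows it must be $\{a_2,a_3,x_1,x_2\}$, hence $x_2$ lies in the second edge $e_3$ at $a_2$ and $x_1$ in the second edge $e_4$ at $a_3$. Then $(a_2,x_2)$ is good (same reasoning applied to $\{a_1,a_2,x_2\}$), and analysing \emph{its} forced core shows the remaining vertex of $e_3$ must have degree $\ge 3$, i.e.\ lie in $X$, so $e_3$ is Type~2. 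This two-step use of goodness is short, needs no choice of auxiliary vertex, and immediately gives both $A_1\cap A_2=\emptyset$ and $|A_2|=2|A_1|$. You could replace your contradiction argument with this and keep the rest of your proof unchanged.
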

\begin{proof}
    By Lemma~\ref{Lem:no case a,b}, for any vertex $a_1 \in A_1$, there exist vertices $a_2, a_3 \in A_2$, $x_1, x_2 \in X$ such that $e_1 = a_1a_2x_1$ and $e_2 = a_1a_3x_2$. Let $e_3$ and $e_4$ be the other hyperedges containing $a_2$ and $a_3$, respectively. We first claim that both $e_3$ and $e_4$ are of Type 2. Indeed, since $a_1a_2x_1, a_1a_3x_2\in E(\dH)$, Fact~\ref{Fact1} (ii) implies that $(a_1, a_2)$, $(a_1, x_1)$, $(a_1, a_3)$, $(a_1, x_2)$, $(a_2, x_1)$ and $(a_3, x_2)$ are bad. Since $a_1a_2a_3\not\in E(\dH)$, $(a_2, a_3)$ is good and the core of the new Berge-$K_4$ in $\dH+a_2a_3$ contains $x_1$ and $x_2$, which forces $x_2\in e_3$ and $x_1\in e_4$. Furthermore, since $a_1a_2x_2\not\in E(\dH)$, the pair $(a_2, x_2)$ is good. The core of the new Berge-$K_4$ in $\dH+a_2x_2$ contains some vertex $x_3\in X\setminus \{x_1,x_2\}$, implying that $e_3$ is of Type 2. By a similar argument, $e_4$ is also of Type 2. 
    Thus $|A_2|=2|A_1|$ and the number of hyperedges between $A_1\cup A_2$ and $X$ is $2|A_1|+|A_2|=\frac{4}{3}(|A_1|+|A_2|)$. 

    Based on the analysis above, every vertex in $A_3$ has no neighbor in $A_2$. That is, for every $a_4\in A_3$, there is a vertex $a_5\in A_3$ such that $a_4a_5x_4\in E(\dH)$ for some $x_4\in X$, and there are two hyperedges containing $a_4$ and $a_5$ respectively which are of Type 2. Thus the number of hyperedges between $A_3$ and $X$ is $\frac{3}{2}|A_3|$.

    Therefore, $(A_1,A_2,A_3,A_4)$ is a partition of $A$, and the number of hyperedges between $A$ and $X$ is:
$$e(\dH[A,X])=\frac{4}{3}(|A_1|+|A_2|)+\frac{3}{2}|A_3|+2|A_4|\geq \frac{4}{3}|A|.$$
Furthermore, we have $|X|+|A|=n\geq e(\dH)=e(\dH[A,X])+e(\dH[X])\geq \frac{4}{3} |A|$, which implies that $|A|\leq 3|X|$. 
Thus $|X|\geq n/4\geq 96/4=24$, which completes the proof of the claim.
\end{proof}

We now complete the proof by considering whether or not there exists a bad pair in $A.$

If there is a pair $(a,a^\prime)\in A\times A$ is bad.
Let $X_1=(N_{\dH}(a)\cup N_{\dH}(a^\prime))\cap X$. Then $|X_1|\leq 8$. 
For any $v\in V(\dH)\setminus (\{a,a^\prime\}\cup X_1)$, $aa^\prime v \not\in E(\dH)$. Since $(a,a^\prime)$ is bad, at least one of $(a,v)$ and $(a^\prime,v)$ is good. So there are two vertices $x_i, x_j\in X_1$ and two hyperedges $e_v^1,e_v^2\in E(\dH)$ such that $v,x_i\in e_v^1$ and $v,x_j\in e_v^2$. By Lemma~\ref{Lem:no case a,b}, $|N_{\dH}(a,a^\prime)|\leq 1$. Hence there are at least $|V(\dH)\setminus (\{a,a^\prime\}\cup X_1)|\geq n-2-|X_1|=n-2-|X_1|$ disjoint hyperedges in $E(\dH-\{a,a^\prime\})$ containing at least one vertex from $X_1$. 
Therefore, we have $$\sum_{x\in X_1}d_{\dH}(x)\geq \sum_{x\in X_1}d_{\dH-\{a,a^\prime\}}(x)+|X_1|\geq n-2-|X_1|+|X_1|=n-2.$$ Hence, 
\begin{align*}
    e(\dH)  &\geq \left\lceil\frac{3(|X|-|X_1|)+n-2+2|A|}{3}\right\rceil
    = \left\lceil\frac{3n+|X|-(3|X_1|+2)}{3}\right\rceil\geq n,
\end{align*}
the last inequality holds since by Claim~\ref{clm: (A_1,A_2,A_3,A_4) partition A, X>25}, $|X|\geq 24\geq 3|X_1|$, and we are done.
\medskip

Now we may assume that every pair $(a,a^\prime)\in A\times A$ is good.
Fact~\ref{Fact1} (ii) implies that $(a_i,a_j)$ is bad if $a_ia_jx\in E(\dH)$ for some $x\in X$. Thus $A_1=A_2=A_3=\emptyset$, $A=A_4$.

Let $a_8\in A$ be a vertex, and let $a_8x_ax_b, a_8x_cx_d\in E(\dH)$ be the hyperedges incident to $a_8$ where $x_a,x_b,x_c,x_d\in X$. Set $\ell=|N_{\dH}(a_8)|=|\{x_a,x_b,x_c,x_d\}|$ and $A^\prime=A\setminus\{a_8\}$. Clearly we have $\ell\in \{3,4\}$.

For any $a\in A^\prime$, since $(a_8,a)$ is good, there are three hyperedges $e_a^1,e_a^2,e_a^3\in E(\dH)\setminus\{a_8x_ax_b, a_8x_cx_d\}$ such that $a,x_a^1\in e_a^1$, $a,x_a^2\in e_a^2$ and $x_a^1,x_a^2\in e_a^3$, where $x_a^1\in \{x_a,x_b\}$ and $x_a^2\in \{x_c,x_d\}$. Set $D=\cup_{a\in A'} \{x_a^1, x_a^2\}$, then $2\leq |D|\leq \ell\leq 4$.

If $|D|=2$, without loss of generality, we assume that $(x_a^1,x_a^2)=(x_a,x_c)$ for every $a\in A^\prime$. If there is a vertex $a$ such that $e_a^3$ is a hyperedge in $\dH[X]$, then $e_a^3=x_ax_cx$ for some $x\in X$. Thus $d_{\dH}(x_a)+d_{\dH}(x_c)\geq 2|A^\prime|+|\{a_8x_ax_b, a_8x_cx_d\}|+2|\{e_a^3\}
|=2(|A|-1)+2+2=2|A|+2$. If every hyperedge $e_a^3$ is between $A$ and $X$, then we may assume that $e_{a}^3=a_9x_ax_c$ for some $a, a_9\in A$. Since $(x_{a_9}^1,x_{a_9}^2)=(x_a,x_c)$ and $d_{\dH}(a_9)=2$, there is a vertex $a_{10}$ such that $e_{a_9}^3=a_{10}x_ax_c$. Hence, we have $d_{\dH}(x_a)+d_{\dH}(x_c)\geq 2|A^\prime|+|\{a_8x_ax_b, a_8x_cx_d\}|+|\{a_9x_ax_c, a_{10}x_ax_c\}|=2(|A|-1)+2+2=2|A|+2$.
Therefore $$e(\dH)\geq \left\lceil\frac{3(|X|-2)+2|A|+2+2|A|}{3}\right\rceil
    = \left\lceil\frac{3n+|A|-4}{3}\right\rceil\geq n,$$ where the last inequality follows from $|A|\geq 3$.

If $|D|=3$, then we may assume that $(x_a^1,x_a^2)=(x_a,x_c)$ or $(x_a,x_d)$ for every $a\in A^\prime$. That is, $D=\{x_a, x_c, x_d\}$. 
If $e_a^3=x_ax_cx_d$ for any $a\in A^\prime$, then $\sum_{x\in D}d_{\dH}(x)\geq 2|A^\prime|+2|\{a_8x_cx_d\}|+|\{a_8x_ax_b\}|+3|\{x_ax_cx_d\}|=2(|A|-1)+6=2|A|+4$. Otherwise, through an analysis similar to the case when $|D|=2$, we have $\sum_{x\in D}d_{\dH}(x)\geq 2(|A|-1)+2|\{a_8x_cx_d\}|+|\{a_8x_ax_b\}|+4=2|A|+5$. Thus $$e(\dH)\geq \left\lceil\frac{3(|X|-3)+2|A|+4+2|A|}{3}\right\rceil
    = \left\lceil\frac{3n+|A|-5}{3}\right\rceil\geq n,$$ where the last inequality follows from $|A|\geq 3$.

If $|D|=4$, then by the analysis similar to the cases when $|D|=2$ and $|D|=3$, we have $\sum_{x\in D}d_{\dH}(x)\geq 2(|A|-1)+2|\{a_8x_ax_c\}|+2|\{a_8x_bx_d\}|+4=2|A|+6$. Thus $$e(\dH)\geq \left\lceil\frac{3(|X|-4)+2|A|+6+2|A|}{3}\right\rceil
    = \left\lceil\frac{3n+|A|-6}{3}\right\rceil.$$
If $|A|\geq 4$, then we have $e(\dH)\geq \left\lceil\frac{3n+|A|-6}{3}\right\rceil=n$ and we are done . 

In what follows, we assume that $|A|=3$ and $|D|=\ell= 4$. Moreover, we claim that every vertex in $X$ has degree $3$. Otherwise we have $\sum_{x\in X}d_{\dH}(x)\geq 3|X|+1$, which implies $e(\dH)\geq \left\lceil\frac{\sum_{x\in X}d_{\dH}(x)+2|A|}{3}\right\rceil\geq \left\lceil\frac{3n-2}{3}\right\rceil= n$, and we are done. 
Let $A=\{a_8,a_9,a_{10}\}$, and let $x_7,x_8,x_9,x_{10}$ be $x_a,x_b,x_c,x_d$ respectively. 
Since $|D|=4,$ we may assume that $(x_{a_9}^1,x_{a_9}^2)=(x_7,x_9)$ and $(x_{a_{10}}^1,x_{a_{10}}^2)=(x_8,x_{10})$.  Since $x_7$ belongs to the three hyperedges $a_8x_7x_8, e_{a_9}^1,$ and $ e_{a_9}^3$, it can not be in any other hyperedges. Similarly, the same type of conclusion holds for $x_8,x_9$ and $x_{10}$.
By the assumption, $(a_9,a_{10})$ is good. Thus there are two vertices $x_{11}, x_{12}\in X\setminus\{x_7,x_8,x_9,x_{10}\}$ such that $\dH+a_9a_{10}$ contains a Berge-$K_4$ with core $\{a_9,a_{10},x_{11},x_{12}\}$ satisfies that $e_{a_9}^1=a_9x_7x_{11}$, $e_{a_9}^2=a_9x_9x_{12}$, $e_{a_{10}}^1=a_{10}x_8x_{12}$, $e_{a_{10}}^2=a_{10}x_{10}x_{11}$. (Here, if $e_{a_9}^1=a_9x_7x_{11}$, $e_{a_9}^2=a_9x_9x_{12}$, $e_{a_{10}}^1=a_{10}x_8x_{11}$, $e_{a_{10}}^2=a_{10}x_{10}x_{12}$, then by Fact~\ref{Fact1} (ii), $(x_7,x_8)$, $(x_7,x_{11})$ and $(x_8,x_{11})$ are bad, this contracts with $x_7x_8x_{11}\not\in E(\dH)$.) 
Moreover, there is a hyperedge $e_7$ such that $x_{11}, x_{12}\in e_7$. 

Consider $x_7x_8x_{11}\not\in E(\dH)$, since $(x_7,x_8)$ and $(x_7,x_{11})$ are bad, $(x_8, x_{11})$ is good. Assume that $e_7=x_{11}x_{12}v_1$ and $e_{a_9}^3=x_7x_9v_2$ and $e_{a_{10}}^3=x_8x_{10}v_3$. Then $N_{\dH}(x_8)=\{a_8, x_7,x_{10},x_{12},v_3\}$ and $N_{\dH}(x_{11})=\{a_9,a_{10},x_7,x_{10},x_{12},v_1\}$. However, $x_7\not\in N_{\dH}(x_{10})\cup N_{\dH}(x_{12})$, $x_{10}\not\in N_{\dH}(x_{12})\cup N_{\dH}(x_{7})$, $x_{12}\not\in N_{\dH}(x_{10})\cup N_{\dH}(x_{7})$, which implies that the core of a Berge-$K_4$ in $\dH+x_8x_{11}$ contains at most one vertex in $\{x_7,x_{10},x_{12}\}$. Thus $v_1=v_3$ and the Berge-$K_4$ in $\dH+x_8 x_{11}$ contains $e_7=x_{11}x_{12}v_1$ and $e_{a_{10}}^3=x_8x_{10}v_1$ which are mapped to $x_{11}v_1$ and $x_8v_1$ respectively. 
There is no hyperedge in $E(\dH)\setminus \{e_7,e_{a_{10}}^3\}$ can be mapped to $v_1x_{10}$ or $v_1x_{12}$, which implies that the core of Berge-$K_4$ in $\dH+x_8 x_{11}$ must be $\{x_8,x_{11},v_1,x_7\}$ and $v_2=v_1$.
That is, $e_{a_9}^3=x_7x_9v_1$, $e_{a_{10}}^3=x_8x_{10}v_1$ and $e_{7}=x_{11}x_{12}v_1$. 
Let $Z=\{a_8,a_9,a_{10},x_7,x_8,x_9,x_{10},x_{11}, x_{12}, v_1\}.$
Since $d_{\dH}(x)=3$ for any $x\in X$ and $d_{\dH}(a)=2$ for any $a\in A$, every vertex $v\in Z$ has $N_{\dH}(v)\subset Z$. That means for any $v\in V(\dH)\setminus Z$, $(v,x_7)$, $(v,x_8)$ are bad. However, $(x_7,x_8)$ is also bad and $x_7x_8v\not\in E(\dH)$, this contradicts the assumption that $\dH$ is Berge-$K_4$-saturated, completing the proof. \QED

\subsection{Proof of Theorem~\ref{mainthm}}\label{Sec: pf of main thm}
Let $n\geq 96$ be an integer and $\dH$ be an $n$-vertex $3$-uniform Berge-$K_4$-saturated hypergraph with the minimum number of edges. By Constructions \ref{Construction: odd n} and \ref{Construction: even n}, we have $e(\dH)\leq n$.
Form the argument in Subsection~\ref{Sec:lower bound n large}, we also have $e(\dH)\geq n$. Consequently, 
$$\sat_3(n,\text{Berge-}K_4)=n\ \text{holds\ for}\ n\ge 96.$$
For smaller $n$, a computer search yields 
$\sat_3(6,\text{Berge-}K_4)=5$ and $sat_3(n,\text{Berge-}K_4)=n$ for $n=5,7,8$ (see~\ref{Sec:Appendix A}).
Furthermore, by applying Lemma~\ref{Lemma: Add T}, we can generate numerous non-isomorphic families of extremal hypergraphs by adding $\dT$ on various pairs of vertices in the hypergraphs listed in Table~\ref{Table: Extremal hypergraphs}, which completes the proof of Theorem~\ref{mainthm}.\QED

\section{Algorithm for finding uniform Berge-\texorpdfstring{$K_\ell$}{} saturated hypergraphs}\label{Sec: Algorithm}

In this section, we introduce an algorithm for finding $k$-uniform Berge-$K_\ell$ saturated hypergraphs with given number of vertices and hyperedges, where $k$ and $\ell$ are non-negative integers. For expository convenience, we take $k=3$ and $\ell=4$. Let's recall that the incidence graph of a hypergraph is a bipartite graph with one part representing the hypergraph’s vertices and the other representing its hyperedges, where an edge connects a vertex to a hyperedge if the vertex belongs to that hyperedge.

The algorithm is straightforward. We first use \texttt{generate\_hypergraphs()} to generate all candidate hypergraphs via backtracking and get their incidence graphs, then use \texttt{select\_berge\_k4\_saturated\_p}\\ \texttt{arallel()} to select the ones corresponding to Berge-$K_4$-saturated hypergraphs in parallel, finally use \texttt{non\_isomorphic()} and \texttt{graph\_to\_hyper()} to obtain non-isomorphic ones in parallel and convert them back to hypergraphs.

\begin{breakablealgorithm}
   \caption{Searching $3$-Uniform Berge $K_4$ Saturated Hypergraphs}
   \label{alg:Searching}
   \begin{algorithmic}[1] 
   \REQUIRE $n\_vertices$, $n\_edges$, $uniform$, $n\_min\_degree$
   \ENSURE A list of non-isomorphic Berge $K_4$ saturated hypergraphs
   
   \LineComment{Generate all candidate hypergraphs via backtracking, and convert them into incidence graphs.}
\STATE candidates $\leftarrow$ \texttt{generate\_hypergraphs}($n\_vertices$, $n\_edges$, $uniform$, $n\_min\_degree$);

\LineComment{Parallel selection: for each $g \in$ candidates, \\
     \quad \quad \quad is\_berge\_k4\_saturated($g$) checks is\_berge\_k4\_free($g$) first,\\
     \quad \quad \quad then checks saturation.}
\STATE \LongState{berge\_k4\_saturated $\leftarrow$ \\
       \qquad \quad \quad \quad \texttt{select\_berge\_k4\_saturated\_parallel}(candidates);}

\STATE non\_isomorphic $\leftarrow$ \texttt{non\_isomorphic}(berge\_k4\_saturated);
\STATE result $\leftarrow$ \texttt{graph\_to\_hyper}(non\_isomorphic);
\RETURN result;
   \end{algorithmic}
   \end{breakablealgorithm}

   Considering the incidence graphs allows us to reduce the problem of deciding whether a hypergraph is Berge-$K_4$-saturated to the problem of finding maximum matchings in auxiliary graphs derived from its incidence graph. Moreover, it enables us to select non-isomorphic hypergraphs by using VF2++ algorithm \cite{AP2018}, which is an algorithm for graph isomorphism detection.

   We now turn to \texttt{select\_berge\_k4\_saturated\_parallel()}, which is a wrapper to execute \texttt{is\_berge}\\ \texttt{\_k4\_saturated()} in parallel. Generally speaking, \texttt{is\_berge\_k4\_saturated()} first uses \texttt{is\_berge\_k4\_f}\\ \texttt{ree()} to check whether a hypergraph is Berge-$K_4$-free, if so, it further tests whether the hypergraph is Berge-$K_4$-saturated. Let's introduce \texttt{is\_berge\_k4\_free()} first. For ease of illustration, we set the input to be a hypergraph (although it is actually an incidence graph).

   Let $\dH = (\dV, \dE)$ be the input hypergraph, we consider the $4$-element subsets of $\dV$. If there is a $4$-element subset that forms a Berge-$K_4$, then $\dH$ is not Berge-$K_4$-free; otherwise, it is. For each $4$-element subset $T$, we construct an auxiliary bipartite graph with $L:=\{p: p\subseteq T,|p|=2\}$ as its left part and $R:=\mathcal{E}$ as its right part. For $p\in L$ and $e\in R$, $p$ is adjacent to $e$ if and only if $p\subseteq e$. It is easy to see that if the matching number of the auxiliary bipartite is $6$, then $T$ forms a Berge-$K_4$ implies $\dH$ is not Berge-$K_4$-free; otherwise, $T$ can not form a Berge-$K_4$. This is how \texttt{is\_berge\_k4\_free()} works.

\begin{breakablealgorithm}
\caption{\texttt{is\_berge\_k4\_free}$(\dH)$}
\label{alg:is_berge_k4_free}
\begin{algorithmic}[1]
\REQUIRE A hypergraph $\dH = (\dV, \dE)$
\ENSURE \texttt{True} if $\dH$ is Berge $K_4$-free; \texttt{False} otherwise

\FOR{each 4-tuple $T = (v_1, v_2, v_3, v_4) \subseteq \dV$ with $|T| = 4$}
    \STATE Let $P_T \leftarrow$ all $\binom{4}{2} = 6$ unordered pairs from $T$;
    
    \STATE Construct a bipartite graph $\text{aux\_g} = (L \cup R, E)$ where:
    \begin{itemize}
        \item $L \leftarrow P_T$ (left part: the 6 pairs),
        \item $R \leftarrow \mathcal{E}$ (right part: all hyperedges of $H$),
        \item $(p, e) \in E$ iff $p \subseteq e$ (i.e., pair $p$ is contained in hyperedge $e$).
    \end{itemize}
    \LineComment{Compute a maximum matching $M$ in $\text{aux\_g}$.}
    \STATE $M$ $\leftarrow$ \texttt{nx.max\_weight\_matching}(aux\_g, maxcardinality=True); 
    
    \IF{$|M| = 6$}
        \RETURN \texttt{False} \Comment{Berge $K_4$ found}
    \ENDIF
\ENDFOR
\RETURN \texttt{True} \Comment{$H$ is Berge $K_4$-free}
\end{algorithmic}
\end{breakablealgorithm}

Next, we introduce \texttt{is\_berge\_k4\_saturated()}. In this algorithm, we first check wether $\dH$ is Berge-$K_4$-free. For each Berge-$K_4$-free hypergraph, we find out all its bad vertex pairs. If there are $3$ bad vertex pairs form a non-hyperedge, then by the definition of bad pair, adding this non-hyperedge does not create a Berge-$K_4$, thereby implying that the hypergraph is not Berge-$K_4$-saturated; otherwise, it is.

Recall that a vertex pair $(v_1,v_2)$ is defined as good if there exists two vertices $v_3,v_4$ and five hyperedges $e_{13},e_{14},e_{23},e_{24},e_{34}$ such that $\{v_i,v_j\}\subseteq e_{i,j}$ for all $(i,j)\neq (1,2)$. Furthermore, $v_3,v_4$ are common neighbors of $v_1$ and $v_2$ with each having degree at least $3$. Inspired by this, for a vertex pair $(u,v)$, we consider all $4$-element subsets $\{u,v,x,y\}$ where $x,y$ are two common neighbors of $u$ and $v$ with each having degree at least $3$. We construct auxiliary bipartite graphs similar to those in Algorithm~\ref{alg:is_berge_k4_free} and examine their matching numbers to determine wether $(u,v)$ is good.

\begin{breakablealgorithm}
\caption{\texttt{is\_berge\_k4\_saturated}$(\dH)$}
\label{alg:is_berge_k4_saturated}
\begin{algorithmic}[1]
\REQUIRE A hypergraph $\dH = (\dV, \mathcal{E})$
\ENSURE \texttt{True} if $\dH$ is Berge $K_4$-saturated; \texttt{False} otherwise

\IF{\texttt{is\_berge\_k4\_free}$(\dH) = \texttt{False}$}
    \RETURN \texttt{False}
\ENDIF\Comment{Must be Berge $K_4$-free to be saturated}

\STATE badPairs $\leftarrow \emptyset$;

\FOR{each unordered pair $(u, v)$ with $u \neq v$}
    \STATE \LongState{commonNeighbors $\leftarrow$\\ \qquad \quad \quad \quad$ \{ w \in \dV \setminus \{u,v\} \mid w \text{ adjacent to both }u \text{ and } v,\ \deg(w) \geq 3 \}$;}
    
    \IF{$|$commonNeighbors$| < 3$}
        \STATE add $(u, v)$ to badPairs;
        \CONTINUE;
    \ENDIF

    \STATE isGood $\leftarrow$ \texttt{False};

    \FOR{each unordered pair $(x, y) \subseteq$ commonNeighbors}
        \STATE Let $T \leftarrow (u, v, x, y)$;
        \STATE Let $P_T \leftarrow$ all $\binom{4}{2} - 1 = 5$ pairs from $T$ excluding $(u, v)$;
        
        \STATE Construct bipartite graph $\text{aux\_g} = (L \cup R, E)$ where:
        \begin{itemize}
            \item $L \gets P_T$,
            \item $R \gets \mathcal{E}$,
            \item $(p, e) \in E$ iff $p \subseteq e$.
        \end{itemize}
        
        \STATE Compute a maximum matching $M$ in $\text{aux\_g}$;
        
        \IF{$|M| = 5$}
            \STATE isGood $\leftarrow$ \texttt{True};
            \BREAK; \Comment{Found one witness: $(u,v)$ is good}
        \ENDIF
    \ENDFOR

    \IF{not isGood}
        \STATE add $(u, v)$ to badPairs;
    \ENDIF
\ENDFOR

\LineComment{Check saturation condition: every non-edge must create a Berge $K_4$.}
\FOR{each triple of distinct bad pairs $\{(a,b), (b,c), (a,c)\}$}
    \IF{$\{a,b,c\}$ induces a non-hyperedge in $H$}
        \RETURN \texttt{False};
    \ENDIF
\ENDFOR

\RETURN \texttt{True};
\end{algorithmic}
\end{breakablealgorithm}

\section{Concluding Remarks}\label{Sec: Conclusion}
In this paper, we have proved that $\sat_3(n,\text{Berge-}K_4)=n$ for $n=5,7,8$ and $n\geq 96$, while $\sat_3(6,\text{Berge-}K_4)=5$. 
The constant $96$ in our result could be reduced through a more detailed case analysis in the proof,  especially when handling the case there is a pair of vertices in $A$ is good. However, pushing this down to $n=9$ and thereby settling the problem for all $n$ appears to require substantially new ideas.
We conjecture that, in fact, $\sat_3(n,\text{Berge-}K_4)=n$ holds for every $n\geq 7$.

We have also developed an algorithmic framework for generating $k$-uniform Berge-$K_\ell$-saturated hypergraphs on a given number of vertices. This allowed us to determine the exact saturation numbers and to enumerate all extremal hypergraphs for $n=5,6,7,8$. The same approach can be applied to study saturation numbers for other uniformity parameters $k$ and clique sizes $\ell$.

Finally, we observed that many extremal hypergraphs for small $n$ can be extended to larger extremal hypergraphs by attaching the small configuration $\mathcal{T}$. This suggests a possible recursive structure: for sufficiently large $n$, every extremal hypergraph may be obtained by adding copies of $\mathcal{T}$ to an extremal hypergraph on fewer vertices.

\section*{Acknowledgement}
Tianying Xie was supported by National Key R and D Program of China  2023YFA1010201 and National Natural Science Foundation of China grants 12501474 and 12471336. 
\section*{Code Availability}
The computational code used to perform our algorithm in this paper and some useful tools are openly available on GitHub at \url{https://github.com/cyhcyh/Berge-K_4-saturation}.

\bibliographystyle{unsrt}

    \clearpage
\setcounter{subsection}{0} 
\renewcommand{\thesubsection}{Appendix \Alph{subsection}} 

    \subsection{Extremal hypergraphs for \texorpdfstring{$n = 5,6,7,8$}{}}\label{Sec:Appendix A}
    For small cases, we obtained $\sat_3(6,\text{Berge-}K_4)=5$ and $\sat_3(n,\text{Berge-}K_4)=n$ for $n=5,7,8$ through computer calculations. All extremal hypergraphs are listed in Table~\ref{Table: Extremal hypergraphs}. For hypergraphs with order $n$, we take the vertex set to be $\{0,1,2,...,n-1\}$. The last column of Table~\ref{Table: Extremal hypergraphs} denotes whether $\dT$ can be added to each hypergraph: the paires on which $\dT$ can be added are provided where applicable, and “No” is recorded otherwise.

    \begin{longtable}{
    L 
    C 
    H 
    P 
}
\caption{Extremal hypergraphs by order}\label{Table: Extremal hypergraphs} \\
\toprule
\textbf{Order} & \textbf{No.} & \textbf{Hyperedges} & \textbf{Can add $\dT$?} \\
\midrule
\endfirsthead

\toprule
\textbf{Order} & \textbf{No.} & \textbf{Hyperedges} & \textbf{Can add $\dT$?} \\
\midrule
\endhead

\bottomrule
\endfoot

\multirow{6}{*}{$5$} & \resetorder \stepcounter{ordercounter}\theordercounter & (0, 1, 2), (0, 1, 3), (0, 1, 4), (0, 2, 3), (0, 3, 4) & No \\
\cmidrule{2-4}
 & \stepcounter{ordercounter}\theordercounter & (0, 1, 2), (0, 1, 3), (0, 1, 4), (0, 2, 3), (1, 2, 4) & No \\
 \cmidrule{2-4}
 & \stepcounter{ordercounter}\theordercounter & (0, 1, 2), (0, 1, 3), (0, 1, 4), (0, 2, 3), (1, 2, 3) & No \\
 \cmidrule{2-4}
 & \stepcounter{ordercounter}\theordercounter & (0, 1, 2), (0, 1, 3), (0, 1, 4), (0, 2, 3), (2, 3, 4) & (0, 3), (1, 2) \\
 \cmidrule{2-4}
 & \stepcounter{ordercounter}\theordercounter & (0, 1, 2), (0, 1, 3), (0, 2, 3), (1, 2, 4), (1, 3, 4) & (2, 3) \\
 \cmidrule{2-4}
 & \stepcounter{ordercounter}\theordercounter & (0, 1, 2), (0, 1, 3), (0, 2, 4), (1, 3, 4), (2, 3, 4) & (0, 2), (0, 4) \\
\cmidrule{1-4}

\multirow{1}{*}{$6$} & \resetorder \stepcounter{ordercounter}\theordercounter & (0, 1, 2), (0, 1, 3), (0, 2, 4), (1, 3, 4), (2, 3, 4) & No \\
\cmidrule{1-4}

\multirow{20}{*}{$7$} & \resetorder \stepcounter{ordercounter}\theordercounter & (0, 1, 2), (0, 1, 3), (0, 1, 4), (0, 1, 5), (0, 2, 6), (2, 3, 4), (3, 5, 6) & No \\
\cmidrule{2-4}
& \stepcounter{ordercounter}\theordercounter & (0, 1, 2), (0, 1, 3), (0, 1, 4), (0, 2, 3), (0, 5, 6), (2, 3, 4), (2, 5, 6) & (0, 2) \\
\cmidrule{2-4}
& \stepcounter{ordercounter}\theordercounter & (0, 1, 2), (0, 1, 3), (0, 1, 4), (0, 2, 3), (0, 5, 6), (2, 3, 5), (2, 4, 6) & (0, 2) \\
\cmidrule{2-4}
& \stepcounter{ordercounter}\theordercounter & (0, 1, 2), (0, 1, 3), (0, 1, 4), (0, 2, 3), (1, 5, 6), (2, 3, 4), (2, 5, 6) & (1, 2) \\
\cmidrule{2-4}
& \stepcounter{ordercounter}\theordercounter & (0, 1, 2), (0, 1, 3), (0, 1, 4), (0, 2, 3), (1, 5, 6), (2, 3, 5), (2, 4, 6) & (1, 2) \\
\cmidrule{2-4}
& \stepcounter{ordercounter}\theordercounter & (0, 1, 2), (0, 1, 3), (0, 1, 4), (0, 2, 5), (0, 2, 6), (1, 3, 5), (1, 4, 6) & No \\
\cmidrule{2-4}
& \stepcounter{ordercounter}\theordercounter & (0, 1, 2), (0, 1, 3), (0, 1, 4), (0, 2, 5), (0, 2, 6), (1, 3, 5), (4, 5, 6) & No \\
\cmidrule{2-4}
& \stepcounter{ordercounter}\theordercounter & (0, 1, 2), (0, 1, 3), (0, 1, 4), (0, 2, 5), (0, 2, 6), (1, 5, 6), (2, 3, 4) & No \\
\cmidrule{2-4}
& \stepcounter{ordercounter}\theordercounter & (0, 1, 2), (0, 1, 3), (0, 1, 4), (0, 2, 5), (0, 2, 6), (1, 5, 6), (3, 4, 5) & No \\
\cmidrule{2-4}
& \stepcounter{ordercounter}\theordercounter & (0, 1, 2), (0, 1, 3), (0, 1, 4), (0, 2, 5), (0, 3, 6), (1, 2, 6), (1, 4, 5) & No \\
\multirow{34}{*}{$7$} & \stepcounter{ordercounter}\theordercounter & (0, 1, 2), (0, 1, 3), (0, 1, 4), (0, 2, 5), (0, 3, 6), (1, 4, 5), (2, 5, 6) & No \\
\cmidrule{2-4}
& \stepcounter{ordercounter}\theordercounter & (0, 1, 2), (0, 1, 3), (0, 1, 4), (0, 2, 5), (0, 3, 6), (2, 4, 5), (2, 4, 6) & No \\
\cmidrule{2-4}
& \stepcounter{ordercounter}\theordercounter & (0, 1, 2), (0, 1, 3), (0, 1, 4), (0, 2, 5), (0, 3, 6), (2, 4, 6), (3, 4, 5) & No \\
\cmidrule{2-4}
& \stepcounter{ordercounter}\theordercounter & (0, 1, 2), (0, 1, 3), (0, 1, 4), (0, 2, 5), (0, 5, 6), (1, 2, 6), (2, 3, 4) & No \\
\cmidrule{2-4}
& \stepcounter{ordercounter}\theordercounter & (0, 1, 2), (0, 1, 3), (0, 1, 4), (0, 2, 5), (0, 5, 6), (2, 3, 5), (2, 4, 6) & No \\
\cmidrule{2-4}
& \stepcounter{ordercounter}\theordercounter & (0, 1, 2), (0, 1, 3), (0, 1, 4), (0, 2, 5), (1, 2, 6), (2, 3, 4), (2, 5, 6) & No \\
\cmidrule{2-4}
& \stepcounter{ordercounter}\theordercounter & (0, 1, 2), (0, 1, 3), (0, 1, 4), (0, 2, 5), (1, 2, 6), (2, 3, 4), (3, 5, 6) & No \\
\cmidrule{2-4}
& \stepcounter{ordercounter}\theordercounter & (0, 1, 2), (0, 1, 3), (0, 1, 4), (0, 2, 5), (1, 5, 6), (2, 3, 4), (2, 5, 6) & (1, 2) \\
\cmidrule{2-4}
& \stepcounter{ordercounter}\theordercounter & (0, 1, 2), (0, 1, 3), (0, 1, 4), (0, 2, 5), (2, 3, 4), (2, 3, 6), (3, 5, 6) & No \\
\cmidrule{2-4}
& \stepcounter{ordercounter}\theordercounter & (0, 1, 2), (0, 1, 3), (0, 1, 4), (0, 2, 5), (2, 3, 4), (2, 3, 6), (4, 5, 6) & No \\
\cmidrule{2-4}
& \stepcounter{ordercounter}\theordercounter & (0, 1, 2), (0, 1, 3), (0, 1, 4), (0, 2, 5), (2, 3, 4), (3, 4, 6), (3, 5, 6) & No \\
\cmidrule{2-4}
& \stepcounter{ordercounter}\theordercounter & (0, 1, 2), (0, 1, 3), (0, 1, 4), (0, 2, 5), (2, 3, 5), (2, 3, 6), (2, 4, 6) & No \\
\cmidrule{2-4}
& \stepcounter{ordercounter}\theordercounter & (0, 1, 2), (0, 1, 3), (0, 1, 4), (0, 2, 5), (2, 3, 5), (2, 3, 6), (3, 4, 6) & No \\
\cmidrule{2-4}
& \stepcounter{ordercounter}\theordercounter & (0, 1, 2), (0, 1, 3), (0, 1, 4), (0, 2, 5), (2, 3, 5), (2, 4, 6), (3, 5, 6) & No \\
\cmidrule{2-4}
& \stepcounter{ordercounter}\theordercounter & (0, 1, 2), (0, 1, 3), (0, 1, 4), (0, 2, 5), (2, 3, 6), (3, 4, 5), (3, 4, 6) & No \\
\cmidrule{2-4}
& \stepcounter{ordercounter}\theordercounter & (0, 1, 2), (0, 1, 3), (0, 1, 4), (0, 2, 5), (2, 3, 6), (3, 4, 5), (4, 5, 6) & No \\
\cmidrule{2-4}
& \stepcounter{ordercounter}\theordercounter & (0, 1, 2), (0, 1, 3), (0, 2, 3), (1, 2, 4), (1, 3, 4), (2, 5, 6), (3, 5, 6) & (2, 3) \\
\multirow{34}{*}{$7$} & \stepcounter{ordercounter}\theordercounter & (0, 1, 2), (0, 1, 3), (0, 2, 3), (1, 2, 4), (1, 3, 5), (2, 5, 6), (3, 4, 6) & (2, 3) \\
\cmidrule{2-4}
& \stepcounter{ordercounter}\theordercounter & (0, 1, 2), (0, 1, 3), (0, 2, 4), (0, 3, 4), (0, 5, 6), (1, 2, 5), (1, 3, 6) & No \\
\cmidrule{2-4}
& \stepcounter{ordercounter}\theordercounter & (0, 1, 2), (0, 1, 3), (0, 2, 4), (0, 3, 4), (0, 5, 6), (1, 2, 5), (3, 4, 6) & No \\
\cmidrule{2-4}
& \stepcounter{ordercounter}\theordercounter & (0, 1, 2), (0, 1, 3), (0, 2, 4), (0, 3, 5), (0, 4, 6), (1, 3, 4), (1, 5, 6) & No \\
\cmidrule{2-4}
& \stepcounter{ordercounter}\theordercounter & (0, 1, 2), (0, 1, 3), (0, 2, 4), (0, 3, 5), (0, 4, 6), (1, 3, 4), (4, 5, 6) & No \\
\cmidrule{2-4}
& \stepcounter{ordercounter}\theordercounter & (0, 1, 2), (0, 1, 3), (0, 2, 4), (0, 3, 5), (0, 4, 6), (1, 3, 6), (4, 5, 6) & No \\
\cmidrule{2-4}
& \stepcounter{ordercounter}\theordercounter & (0, 1, 2), (0, 1, 3), (0, 2, 4), (0, 3, 5), (0, 4, 6), (1, 4, 6), (1, 5, 6) & No \\
\cmidrule{2-4}
& \stepcounter{ordercounter}\theordercounter & (0, 1, 2), (0, 1, 3), (0, 2, 4), (0, 3, 5), (0, 4, 6), (1, 5, 6), (3, 4, 5) & No \\
\cmidrule{2-4}
& \stepcounter{ordercounter}\theordercounter & (0, 1, 2), (0, 1, 3), (0, 2, 4), (0, 3, 5), (1, 2, 5), (1, 3, 6), (4, 5, 6) & No \\
\cmidrule{2-4}
& \stepcounter{ordercounter}\theordercounter & (0, 1, 2), (0, 1, 3), (0, 2, 4), (0, 3, 5), (1, 2, 6), (1, 4, 5), (4, 5, 6) & No \\
\cmidrule{2-4}
& \stepcounter{ordercounter}\theordercounter & (0, 1, 2), (0, 1, 3), (0, 2, 4), (0, 3, 5), (2, 3, 4), (2, 5, 6), (3, 4, 6) & (2, 3) \\
\cmidrule{2-4}
& \stepcounter{ordercounter}\theordercounter & (0, 1, 2), (0, 1, 3), (0, 2, 4), (0, 5, 6), (1, 2, 5), (1, 4, 6), (3, 5, 6) & No \\
\cmidrule{2-4}
& \stepcounter{ordercounter}\theordercounter & (0, 1, 2), (0, 1, 3), (0, 2, 4), (0, 5, 6), (1, 2, 5), (3, 4, 5), (3, 4, 6) & No \\
\cmidrule{2-4}
& \stepcounter{ordercounter}\theordercounter & (0, 1, 2), (0, 1, 3), (0, 2, 4), (0, 5, 6), (1, 3, 4), (1, 5, 6), (2, 3, 4) & (0, 1) \\
\cmidrule{2-4}
& \stepcounter{ordercounter}\theordercounter & (0, 1, 2), (0, 1, 3), (0, 2, 4), (0, 5, 6), (1, 3, 4), (2, 3, 4), (3, 5, 6) & (0, 3) \\
\cmidrule{2-4}
& \stepcounter{ordercounter}\theordercounter & (0, 1, 2), (0, 1, 3), (0, 2, 4), (0, 5, 6), (1, 3, 4), (2, 3, 5), (2, 3, 6) & (0, 3) \\
\cmidrule{2-4}
& \stepcounter{ordercounter}\theordercounter & (0, 1, 2), (0, 1, 3), (0, 2, 4), (0, 5, 6), (1, 3, 4), (2, 3, 5), (3, 5, 6) & (0, 3) \\
\multirow{8}{*}{$7$}& \stepcounter{ordercounter}\theordercounter & (0, 1, 2), (0, 1, 3), (0, 2, 4), (0, 5, 6), (1, 3, 4), (2, 4, 5), (3, 4, 6) & (0, 4) \\
\cmidrule{2-4}
& \stepcounter{ordercounter}\theordercounter & (0, 1, 2), (0, 1, 3), (0, 2, 4), (0, 5, 6), (1, 3, 4), (2, 4, 5), (4, 5, 6) & (0, 4) \\
\cmidrule{2-4}
& \stepcounter{ordercounter}\theordercounter & (0, 1, 2), (0, 1, 3), (0, 2, 4), (1, 3, 4), (2, 3, 5), (2, 3, 6), (4, 5, 6) & No \\
\cmidrule{2-4}
& \stepcounter{ordercounter}\theordercounter & (0, 1, 2), (0, 1, 3), (0, 2, 4), (1, 3, 5), (2, 3, 6), (2, 5, 6), (3, 4, 6) & No \\

\cmidrule{1-4}

\multirow{26}{*}{$8$} & \resetorder \stepcounter{ordercounter}\theordercounter & (0, 1, 2), (0, 1, 3), (0, 1, 4), (0, 2, 3), (0, 2, 5), (0, 6, 7), (3, 4, 5), (3, 6, 7) & (0, 3) \\
\cmidrule{2-4}
& \stepcounter{ordercounter}\theordercounter & (0, 1, 2), (0, 1, 3), (0, 1, 4), (0, 2, 3), (1, 2, 5), (1, 6, 7), (3, 4, 5), (3, 6, 7) & (1, 3) \\
\cmidrule{2-4}
& \stepcounter{ordercounter}\theordercounter & (0, 1, 2), (0, 1, 3), (0, 1, 4), (0, 2, 5), (0, 6, 7), (1, 3, 5), (2, 4, 5), (5, 6, 7) & (0, 5) \\
\cmidrule{2-4}
& \stepcounter{ordercounter}\theordercounter & (0, 1, 2), (0, 1, 3), (0, 1, 4), (0, 2, 5), (0, 6, 7), (2, 3, 6), (2, 3, 7), (3, 4, 5) & No \\
\cmidrule{2-4}
& \stepcounter{ordercounter}\theordercounter & (0, 1, 2), (0, 1, 3), (0, 2, 3), (0, 4, 5), (0, 6, 7), (1, 2, 4), (1, 3, 5), (1, 6, 7) & (0, 1) \\
\cmidrule{2-4}
& \stepcounter{ordercounter}\theordercounter & (0, 1, 2), (0, 1, 3), (0, 2, 4), (0, 3, 5), (0, 6, 7), (1, 2, 5), (1, 4, 6), (1, 4, 7) & No \\
\cmidrule{2-4}
& \stepcounter{ordercounter}\theordercounter & (0, 1, 2), (0, 1, 3), (0, 2, 4), (0, 3, 5), (0, 6, 7), (1, 2, 5), (3, 4, 5), (5, 6, 7) & (0, 5) \\
\cmidrule{2-4}
& \stepcounter{ordercounter}\theordercounter & (0, 1, 2), (0, 1, 3), (0, 2, 4), (0, 3, 5), (0, 6, 7), (1, 2, 6), (1, 3, 7), (1, 4, 5) & No \\
\cmidrule{2-4}
& \stepcounter{ordercounter}\theordercounter & (0, 1, 2), (0, 1, 3), (0, 2, 4), (0, 3, 5), (0, 6, 7), (1, 2, 6), (1, 4, 5), (1, 4, 7) & No \\
\cmidrule{2-4}
& \stepcounter{ordercounter}\theordercounter & (0, 1, 2), (0, 1, 3), (0, 2, 4), (0, 3, 5), (0, 6, 7), (1, 2, 6), (1, 4, 7), (5, 6, 7) & No \\
\cmidrule{2-4}
& \stepcounter{ordercounter}\theordercounter & (0, 1, 2), (0, 1, 3), (0, 2, 4), (0, 5, 6), (0, 5, 7), (1, 3, 4), (2, 3, 6), (3, 5, 7) & (0, 3) \\
\cmidrule{2-4}
& \stepcounter{ordercounter}\theordercounter & (0, 1, 2), (0, 1, 3), (0, 2, 4), (0, 5, 6), (0, 5, 7), (1, 3, 4), (2, 4, 5), (4, 6, 7) & (0, 4) \\
\cmidrule{2-4}
& \stepcounter{ordercounter}\theordercounter & (0, 1, 2), (0, 1, 3), (0, 2, 4), (0, 5, 6), (0, 5, 7), (1, 3, 4), (2, 4, 6), (4, 5, 7) & (0, 4) \\
\multirow{8}{*}{$8$} & \stepcounter{ordercounter}\theordercounter & (0, 1, 2), (0, 1, 3), (0, 2, 4), (0, 5, 6), (0, 5, 7), (1, 3, 5), (2, 3, 6), (3, 4, 7) & (0, 3) \\
\cmidrule{2-4}
& \stepcounter{ordercounter}\theordercounter & (0, 1, 2), (0, 1, 3), (0, 2, 4), (0, 5, 6), (0, 5, 7), (1, 3, 6), (2, 3, 7), (3, 4, 5) & (0, 3) \\
\cmidrule{2-4}
& \stepcounter{ordercounter}\theordercounter & (0, 1, 2), (0, 1, 3), (0, 4, 5), (0, 4, 6), (1, 4, 7), (1, 5, 6), (2, 4, 7), (3, 5, 7) & No \\
\cmidrule{2-4}
& \stepcounter{ordercounter}\theordercounter & (0, 1, 2), (0, 1, 3), (0, 4, 5), (0, 4, 6), (1, 5, 7), (2, 3, 5), (2, 4, 7), (3, 5, 6) & No \\
\bottomrule
\end{longtable}

\end{document}